\numberwithin{equation}{section}
\newtheorem{theorem}{Theorem}[section]
\newtheorem{lemma}[theorem]{Lemma}
\newtheorem{proposition}[theorem]{Proposition}
\newtheorem{corollary}[theorem]{Corollary}
\theoremstyle{definition}
\newtheorem{example}[theorem]{Example}
\newtheorem{remark}[theorem]{Remark}
\newcommand{\Bnd}{{\rm End}}
\newcommand{\FPdim}{{\rm FPdim}}
\newcommand{\Hom}{{\rm Hom}}
\newcommand{\Id}{{\rm Id}}
\newcommand{\Aut}{\text{Aut}}
\newcommand{\Rep}{{\rm Rep}}
\newcommand{\Coh}{{\rm Coh}}
\newcommand{\Corep}{\rm Corep}
\newcommand{\Vect}{{\rm Vec}}
\newcommand{\B}{\mathscr{B}}
\newcommand{\C}{\mathscr{C}}
\newcommand{\D}{\mathscr{D}}
\newcommand{\M}{\mathscr{M}}
\newcommand{\N}{\mathscr{N}}
\renewcommand{\O}{\mathscr{O}}
\newcommand{\A}{\mathscr{A}}
\newcommand{\g}{\mathfrak{g}}
\newcommand{\ot}{\otimes}
\newcommand{\ben}{\begin{enumerate}}
\newcommand{\een}{\end{enumerate}}
\newcommand{\End}{{\rm End}}
\newcommand{\E}{\mathscr{E}}
\newcommand{\Z}{\mathscr{Z}}
\newcommand{\Mod}{{\rm Mod}}
\newcommand{\Bimod}{{\rm Bimod}}
\begin{document}

\title[Minimal extensions of Tannakian categories] {Minimal extensions of Tannakian categories in positive characteristic}

\author{Shlomo Gelaki}
\address{Department of Mathematics, Iowa State University, Ames, IA 50011, USA} 
\email{gelaki@iastate.edu}

\date{\today}

\keywords{Finite tensor categories,
non-degenerate braided tensor categories, Tannakian categories, finite group schemes, restricted Lie algebras.}

\begin{abstract}
We extend \cite[Theorem 4.5]{DGNO} and \cite[Theorem 4.22]{LKW} to positive characteristic (i.e., to the finite, not necessarily fusion, case). Namely, we prove that if $\D$ is a finite non-degenerate braided tensor category over an algebraically closed field $k$ of characteristic $p>0$, containing a Tannakian Lagrangian subcategory $\Rep(G)$, where $G$ is a finite $k$-group scheme, then $\D$ is braided tensor equivalent to $\Rep(D^{\omega}(G))$ for some $\omega\in H^3(G,\mathbb{G}_m)$, where $D^{\omega}(G)$ denotes the twisted double of $G$ \cite{G2}. We then prove that the group $\mathcal{M}_{{\rm ext}}(\Rep(G))$ of minimal extensions of $\Rep(G)$ is isomorphic to the group $H^3(G,\mathbb{G}_m)$. In particular, we use \cite{EG2,FP} to show that $\mathcal{M}_{\rm ext}(\Rep(\mu_p))=1$, $\mathcal{M}_{\rm ext}(\Rep(\alpha_p))$ is infinite, and if $\O(\Gamma)^*=u(\g)$ for a semisimple restricted $p$-Lie algebra $\g$, then $\mathcal{M}_{\rm ext}(\Rep(\Gamma))=1$ and $\mathcal{M}_{\rm ext}(\Rep(\Gamma\times \alpha_p))\cong \g^{*(1)}$.
\end{abstract}

\maketitle

\tableofcontents

\section{Introduction}

Let $\E$ be a finite symmetric tensor category over an algebraically closed field $k$ with characteristic $0$. A {\em minimal extension} of $\E$ is, by definition, a finite non-degenerate braided tensor category $\D$ over $k$, containing $\E$ as a Lagrangian subcategory (see \ref{Centralizers and Lagrangian subcategories} below). Given $\E$ as above, it is natural to try to classify its minimal extensions. Indeed, this problem was studied in several papers, e.g., \cite{DN,DGNO,DGNO2,DMNO,GS,GV,JFR,LKW,VR,OY}. 

In particular, in the papers \cite{DGNO,DGNO2,LKW} the authors classify the minimal extensions of Tannakian categories $\E=\Rep(G)$ over $k$ ($G$ a finite group). Note that in this case, $\E$ and all its minimal extensions are fusion categories. More explicitly, in \cite[Theorem 4.5]{DGNO}, \cite[Theorem 4.64]{DGNO2}, it is proved that $\D$ is a minimal extension of $\Rep(G)$ if and only if $\D$ is braided tensor equivalent to the representation category $\Rep(D^{\omega}(G))$ of a certain quasi-Hopf algebra $D^{\omega}(G)$ for some $\omega\in H^3(G,k^{\times})$. Then in \cite[Theorem 4.22]{LKW}, it is shown that minimal extensions of $\Rep(G)$ carry a natural structure of a commutative group, denoted by $\mathcal{M}_{\rm ext}(\Rep(G))$, and that $\mathcal{M}_{\rm ext}(\Rep(G))\cong H^3(G,k^{\times})$ as groups.

Our goal in this paper is to extend the above mentioned results of \cite{DGNO,DGNO2,LKW} to minimal extensions of Tannakian categories in positive characteristic. 

Namely, assume from now on that $k$ is an algebraically closed field with characteristic $p>0$, and let $G$ be a finite $k$-group scheme. Then our goal is to relate minimal extensions of $\Rep(G)$ over $k$, and representation categories of twisted doubles $D^{\omega}(G)$, $\omega\in H^3(G,\mathbb{G}_m)$ \cite{G2}.

More precisely, we first prove the following result.

\begin{theorem}\label{mainnew}
Assume $\D$ is a finite non-degenerate braided tensor category over $k$ containing a Tannakian Lagrangian subcategory $\Rep(G)$, where $G$ is a finite group scheme over $k$. Then  
there exists an element $\omega\in H^3(G,\mathbb{G}_m)$ such that $\D\cong \Z({\rm Coh}(G,\omega))$ as braided tensor categories (equivalently, $\D$ is braided tensor equivalent to the representation category $\Rep(D^{\omega}(G))$ of a twisted double $D^{\omega}(G)$ of $G$).
\end{theorem}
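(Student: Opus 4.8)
The plan is to follow the de-equivariantization strategy used in \cite{DGNO} for the fusion case, adapting each step to the finite, non-semisimple, group-scheme setting. I first record the numerics: since $\Rep(G)$ is Lagrangian in the non-degenerate category $\D$, one has $\Rep(G)'=\Rep(G)$ for the Müger centralizer and, by the relation $\FPdim(\Rep(G))\FPdim(\Rep(G)')=\FPdim(\D)$, that $\FPdim(\D)=|G|^2$, where $|G|:=\dim_k\O(G)=\FPdim(\Rep(G))$. The regular algebra $\O(G)$ is a commutative algebra in $\D$ (commutativity holds because $\O(G)$ lies in the symmetric subcategory $\Rep(G)$, on which the braiding of $\D$ restricts to the symmetry), so I can form the de-equivariantization $\D_G:=\Mod_\D(\O(G))$ of $\O(G)$-modules in $\D$. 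Unlike in characteristic $0$, $\O(G)$ need not be separable --- it is local when $G$ is infinitesimal, e.g. $\O(\alpha_p)=k[x]/(x^p)$ --- so I must use the version of de-equivariantization valid for finite tensor categories rather than the étale-algebra formalism. This $\D_G$ is a finite tensor category with $\FPdim(\D_G)=\FPdim(\D)/|G|=|G|$, carrying a $G$-action whose equivariantization recovers $\D$, i.e. $(\D_G)^G\cong\D$.

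Next I would endow $\D_G$ with the structure of a braided $G$-crossed tensor category: the braiding of $\D$ descends to a $G$-crossed braiding, and the free-module image of $\Rep(G)$ produces a grading of $\D_G$ by $G$ in the sense appropriate to group schemes (a $\Coh(G)$-central structure). The neutral component of this grading is the de-equivariantization of the centralizer $\Rep(G)'=\Rep(G)$, which is $\Vect$. Thus $\D_G$ is a braided $G$-crossed finite tensor category of total FP-dimension $|G|=\FPdim(\Coh(G))$ with trivial neutral component.

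I would then prove that such a $\D_G$ is necessarily ``pointed,'' i.e. braided $G$-crossed equivalent to $\Coh(G,\omega)$ for a class $\omega\in H^3(G,\mathbb{G}_m)$ extracted from the associativity constraint of the forgetful functor $\D_G\to\Coh(G)$. Finally, equivariantizing and using that the equivariantization of $\Coh(G,\omega)$ under its canonical $G$-action is braided equivalent to $\Z(\Coh(G,\omega))\cong\Rep(D^\omega(G))$ (by \cite{G2}), I obtain $\D\cong(\D_G)^G\cong(\Coh(G,\omega))^G\cong\Z(\Coh(G,\omega))$, as desired.

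I expect the pointedness step to be the main obstacle. In the fusion case, ``faithfully $G$-graded with trivial neutral component'' forces every homogeneous component to be an invertible $\Vect$-bimodule category, hence $\cong\Vect$, so the category is $\Vect_G^\omega$; this argument rests on semisimplicity and on $G$ being a discrete set of group elements to index the components. Neither is available here: $\Coh(G)$ is non-semisimple and an infinitesimal $G$ has no underlying set of points. The hard part is therefore to replace the component-by-component argument by one phrased intrinsically through the exact sequence $\Vect\to\D_G\to\Coh(G)$ of finite tensor categories together with FP-dimension counting, concluding that the forgetful functor $\D_G\to\Coh(G)$ becomes an equivalence after twisting and that its associativity defect is a well-defined class in $H^3(G,\mathbb{G}_m)$.
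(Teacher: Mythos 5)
Your setup agrees with the paper's: the numerics $\FPdim(\D)=|G|^2$ via Shimizu's identity, the exact commutative algebra $A=\O(G)$, the de-equivariantization $\C:=\Mod(A)_{\D}$ with $\FPdim(\C)=|G|$, and the final goal $\C\cong\Coh(G,\omega)$. But there is a genuine gap, in two places. First, you assemble the final equivalence through equivariantization: $\D\cong(\D_G)^G\cong(\Coh(G,\omega))^G\cong\Z(\Coh(G,\omega))$, mediated by a braided $G$-crossed structure on $\D_G$. For a finite group scheme $G$ in characteristic $p$ --- especially an infinitesimal one --- none of this scaffolding is available in the form you invoke: there is no honest grading of $\D_G$ by elements of $G$, no established braided $G$-crossed theory for group schemes, and neither $(\D_G)^G\cong\D$ nor $(\Coh(G,\omega))^G\cong\Z(\Coh(G,\omega))$ is a citable result in this setting. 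The paper bypasses equivariantization entirely: the braiding of $\D$ equips the free module functor $F:\D\to\C$ with a central structure $\Phi_{X,Y}$, lifting it to a braided tensor functor ${\rm F}:\D\to\Z(\C)$; the dual functor ${\rm F}^*:\C\boxtimes\C^{\rm op}\to\D^*_{\C}$, computed explicitly as ${\rm F}^*(X\boxtimes Y)=X_+\otimes_A Y_-$, is shown surjective by identifying ${\rm F}^*\circ(F\boxtimes F)$ with the forgetful functor $\Z(\D^*_{\C})\twoheadrightarrow\D^*_{\C}$ (using non-degeneracy: $\D\boxtimes\D^{\rm op}\cong\Z(\D)\cong\Z(\D^*_{\C})$); hence ${\rm F}$ is injective, and equality of FP dimensions makes it a braided equivalence $\D\cong\Z(\C)$, following \cite[Proposition 4.2]{DGNO}. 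Your plan needs either this argument or proofs of the group-scheme equivariantization statements; it supplies neither.

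Second, the step you yourself flag as ``the main obstacle'' --- pointedness of $\C$ and the comparison with $\Coh(G,\omega)$ --- is the heart of the proof, and your sketch presupposes a ``forgetful functor $\D_G\to\Coh(G)$'' whose existence is precisely what must be established (and which, before twisting, exists only as a \emph{quasi}-tensor functor; the failure of associativity is where $\omega$ comes from, so a sequence $\Vect\to\C\to\Coh(G)$ of tensor categories cannot be the starting point). The paper fills this in concretely: it dualizes the exact sequence $\Rep(G)\to\D\to\C$ (with respect to $\Vect$) to get $\C\to\D^{*\rm op}_{\C}\to\Coh(G)\boxtimes\Bnd(\C)$ together with an abelian factorization $\D^{*\rm op}_{\C}\simeq\Coh(G)\boxtimes\C$; it uses the equivalence ${\rm F}^*$ to show each simple $X\in\C$ satisfies $X\otimes_A X^*=A$ (via $\Hom_{\D^*_{\C}}(X_+\otimes_A(X^*)_-,A)=k$), so $\C$ is pointed; it proves $\FPdim(P_{\C}({\bf 1}))=\FPdim(P_{\Coh(G)}({\bf 1}))$ and counts projective covers to show the injection $G(k)\to G(\C)$ is onto; then the central structure restricted to $\Rep(G)$ gives each $X\in\C$ a $kG$-comodule structure $\Phi_X$, and --- only after pointedness, which licenses a quasi-fiber functor $Q:\C\to\Vect$ by \cite[Proposition 2.6]{EO} --- it builds $\widetilde{\Phi}:\C\to\Coh(G)$, proves it is a quasi-tensor equivalence, and reads off $\omega$ from the transported associativity constraint. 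You correctly located where the difficulty sits, but the decisive arguments (pointedness, $G(\C)\cong G(k)$, construction of the comparison functor, and the equivalence $\D\cong\Z(\C)$) are absent from the proposal, so as it stands it is a plan rather than a proof.
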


Fix a finite non-degenerate braided tensor category $\D$ over $k$, and consider the set of all triples
$(G, \omega, {\rm F})$, where $G$ is a finite group scheme over $k$, $\omega\in H^3(G,
\mathbb{G}_m)$, and ${\rm F}: \D \xrightarrow{\cong} \Z({\rm Coh}(G,\omega))$ is a
braided tensor equivalence. We say that $(G_1,
\omega_1, {\rm F}_1)$, $(G_2, \omega_2, {\rm F}_2)$ are equivalent, if there
exists a tensor equivalence $\varphi: {\rm Coh}(G_1,\omega_1)
\xrightarrow{\cong} {\rm Coh}(G_2,\omega_2)$ such that
$\mathcal{F}_2\circ {\rm F}_2  = \varphi\circ \mathcal{F}_1\circ {\rm F}_1$,
where $\mathcal{F}_i : \Z({\rm Coh}(G_i,\omega_i)) \twoheadrightarrow
{\rm Coh}(G_i,\omega_i)$ are the canonical forgetful
functors, $i=1,2$. Let $[(G,\omega,{\rm F})]$ denote the equivalence class of $(G,\omega,{\rm F})$.

Then using Theorem \ref{mainnew}, we obtain the following corollary.

\begin{corollary}\label{main1}
Let $\D$ be a finite non-degenerate braided tensor category over $k$. Then the following sets are in natural one to one correspondence with each other:
\begin{enumerate}
\item
Equivalence classes $[(G,\omega,{\rm F})]$.
\item
Tannakian Lagrangian subcategories of $\D$.
\end{enumerate}
\end{corollary}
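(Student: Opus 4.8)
The plan is to construct the correspondence in both directions and verify they are mutually inverse. Given Theorem~\ref{mainnew}, the forward map is essentially automatic: from an equivalence class $[(G,\omega,\mathrm{F})]$, the braided equivalence $\mathrm{F}\colon \D \xrightarrow{\cong} \Z(\mathrm{Coh}(G,\omega))$ lets me transport the canonical Tannakian Lagrangian subcategory of $\Z(\mathrm{Coh}(G,\omega))$ back into $\D$. Concretely, inside $\Z(\mathrm{Coh}(G,\omega)) \cong \Rep(D^\omega(G))$ there is a distinguished copy of $\Rep(G)$ (the image of the section of the forgetful functor, or equivalently the representations pulled back along the canonical Hopf map $D^\omega(G) \to \O(G)^*$), and this is a Tannakian Lagrangian subcategory. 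Its preimage $\mathrm{F}^{-1}(\Rep(G))$ is then a Tannakian Lagrangian subcategory of $\D$. The first thing I would check is that this assignment is well defined on equivalence classes: if $(G_1,\omega_1,\mathrm{F}_1)$ and $(G_2,\omega_2,\mathrm{F}_2)$ are equivalent via $\varphi$, the compatibility $\mathcal{F}_2 \circ \mathrm{F}_2 = \varphi \circ \mathcal{F}_1 \circ \mathrm{F}_1$ forces the two resulting subcategories of $\D$ to coincide, because the Tannakian Lagrangian subcategory is precisely the preimage of the unit under the forgetful functor's right adjoint (the subcategory of objects sent by $\mathcal{F}_i$ into the image of $\varphi$), and $\varphi$ intertwines the two forgetful functors.

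For the reverse map, I would start with a Tannakian Lagrangian subcategory $\mathcal{L} \subseteq \D$. By definition $\mathcal{L}$ is symmetric Tannakian, so by the positive-characteristic Tannakian reconstruction (Deligne's theorem in the super/group-scheme setting) there is a finite $k$-group scheme $G$ with a braided tensor equivalence $\mathcal{L} \cong \Rep(G)$. Since $\mathcal{L}$ is Lagrangian, Theorem~\ref{mainnew} applies and yields an $\omega \in H^3(G,\mathbb{G}_m)$ together with a braided tensor equivalence $\mathrm{F}\colon \D \xrightarrow{\cong} \Z(\mathrm{Coh}(G,\omega))$. This produces a triple $(G,\omega,\mathrm{F})$, and I would take its class $[(G,\omega,\mathrm{F})]$. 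The point requiring care here is that $G$, $\omega$, and $\mathrm{F}$ are only determined up to the ambiguities built into the equivalence relation, so I must argue that the class $[(G,\omega,\mathrm{F})]$ is independent of all the choices: a different identification $\mathcal{L} \cong \Rep(G')$ gives $G'$ isomorphic to $G$ as a group scheme, and the different $\mathrm{F}$ supplied by Theorem~\ref{mainnew} will differ by a tensor autoequivalence $\varphi$ of $\mathrm{Coh}(G,\omega)$ of exactly the kind permitted in the equivalence relation.

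To show these two maps are inverse, I would compose them in both orders. Starting from a class $[(G,\omega,\mathrm{F})]$, passing to the subcategory $\mathcal{L} = \mathrm{F}^{-1}(\Rep(G))$ and then reconstructing, the equivalence $\mathrm{F}$ restricts to an identification $\mathcal{L} \cong \Rep(G)$, and Theorem~\ref{mainnew} applied to this data recovers the same $\omega$ (up to the permitted ambiguity) and an equivalence differing from $\mathrm{F}$ by an admissible $\varphi$, so the resulting class is the original one. Conversely, starting from a Tannakian Lagrangian $\mathcal{L}$, building $(G,\omega,\mathrm{F})$ and then forming $\mathrm{F}^{-1}(\Rep(G))$, I must check this returns $\mathcal{L}$ itself; this holds because $\mathrm{F}$ was constructed precisely so that it carries $\mathcal{L}$ onto the canonical Tannakian subcategory $\Rep(G) \subseteq \Z(\mathrm{Coh}(G,\omega))$.

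I expect the main obstacle to be the well-definedness of the reverse map, i.e.\ pinning down exactly how much $\omega$ and $\mathrm{F}$ can vary as the auxiliary identification $\mathcal{L}\cong\Rep(G)$ is changed, and verifying that every such variation is absorbed by a genuine tensor equivalence $\varphi\colon \mathrm{Coh}(G,\omega)\xrightarrow{\cong}\mathrm{Coh}(G,\omega')$ rather than some larger class of modifications. This amounts to unwinding the naturality in the proof of Theorem~\ref{mainnew} and matching it against the precise equivalence relation on triples; the bookkeeping is delicate because $\omega$ lives in a cohomology group and changing it by a coboundary must correspond to an honest autoequivalence of the fusion category of $\mathrm{Coh}$.
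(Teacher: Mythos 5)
Your overall skeleton is the same as the paper's: the forward map sends $[(G,\omega,{\rm F})]$ to ${\rm F}^{-1}(\Rep(G))$ (the objects killed by $\mathcal{F}\circ{\rm F}$), the reverse map invokes Theorem~\ref{mainnew}, and one checks that the two composites are the identity. One of the two composites you treat essentially as the paper does, though even there you elide a step: the preimage ${\rm F}^{-1}(\Rep(G))$ a priori only \emph{contains} the original Lagrangian subcategory $\E$ (the equivalence ${\rm F}$ built in Theorem~\ref{mainnew} sends $\E$ \emph{into}, not obviously \emph{onto}, the canonical copy of $\Rep(G)$), and equality requires the Frobenius--Perron dimension count $\FPdim(\E)=|G|=\FPdim(\Rep(G))$, which the paper spells out.

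The genuine gap is the other composite, equivalently the well-definedness of the reverse map: you must show that if two triples $(G_1,\omega_1,{\rm F}_1)$ and $(G_2,\omega_2,{\rm F}_2)$ determine the same Tannakian Lagrangian subcategory of $\D$, then there exists a tensor equivalence $\varphi:\Coh(G_1,\omega_1)\xrightarrow{\cong}\Coh(G_2,\omega_2)$ with $\mathcal{F}_2\circ{\rm F}_2=\varphi\circ\mathcal{F}_1\circ{\rm F}_1$. You correctly flag this as ``the main obstacle,'' but you only assert the conclusion (the two triples ``differ by an admissible $\varphi$'') and propose to obtain it by ``unwinding the naturality in the proof of Theorem~\ref{mainnew}'' and tracking how $\omega$ changes by coboundaries --- a route you do not carry out, and one the paper deliberately avoids (note also that the two triples need not have ``the same $\omega$'': $\omega_1,\omega_2$ live on different, merely isomorphic, group schemes, and the equivalence relation only demands the existence of $\varphi$). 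The paper's resolution is a concrete categorical construction with no cocycle bookkeeping. First, by Deligne--Milne, the identification of $f(G_1,\omega_1,{\rm F}_1)$ with a representation category is rigidified so that $f(G_1,\omega_1,{\rm F}_1)=\Rep(G_2)$ on the nose, with ${\rm F}_1$ restricting to $\phi^*$ for a group-scheme isomorphism $\phi:G_1\xrightarrow{\cong}G_2$; hence ${\rm F}_1(\O(G_2))=\O(G_1)$, so ${\rm F}_1$ and ${\rm F}_2$ induce tensor equivalences from $\Mod(\O(G_2))_{\D}$ to $\Mod(\O(G_i))_{\Z(\Coh(G_i,\omega_i))}$, $i=1,2$. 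Composing these with the canonical equivalences $\mathcal{I}_i:\Coh(G_i,\omega_i)\xrightarrow{\cong}\Mod(\O(G_i))_{\Z(\Coh(G_i,\omega_i))}$ of Lemma~\ref{centergomega}(3) yields $\varphi:=\mathcal{I}_2^{-1}\circ{\rm F}_2\circ{\rm F}_1^{-1}\circ\mathcal{I}_1$, and the required compatibility $\mathcal{F}_2\circ{\rm F}_2=\varphi\circ\mathcal{F}_1\circ{\rm F}_1$ follows from Lemma~\ref{centergomega}(4), i.e.\ from the fact that $\mathcal{I}_i\circ\mathcal{F}_i$ is the free $\O(G_i)$-module functor. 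Without this construction (or an equivalent one), the bijection is not established.
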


Next we fix a finite group scheme $G$ over $k$, and consider the set of pairs $(\D,\iota)$, where $\D$ is a non-degenerate braided tensor category over $k$, and $\iota:\Rep(G)\xrightarrow{1:1} \D$ is an injective braided tensor functor. We say that two pairs $(\D_1,\iota_1)$ and $(\D_2,\iota_2)$ are equivalent, if there
exists a braided tensor equivalence $\psi:\D_1\xrightarrow{\cong}\D_2$ such that
$\psi\circ \iota_1= \iota_2$.
Let $\mathcal{M}_{\rm ext}(\Rep(G))$ denote the set of all equivalence classes of pairs $(\D,\iota)$. Recall that $\mathcal{M}_{\rm ext}(\Rep(G))$ has a natural structure of a commutative group \cite{LKW}. 

Then using Theorem \ref{mainnew}, we obtain the following corollary, which extends \cite[Theorem 4.22]{LKW} to positive characteristic.

\begin{corollary}\label{maincor}
Let $G$ be a finite group scheme over $k$. There is a group isomorphism 
$$\mathcal{M}_{\rm ext}(\Rep(G))\cong H^3(G,\mathbb{G}_m).$$
\end{corollary}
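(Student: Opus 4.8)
The plan is to build mutually inverse maps between $\mathcal{M}_{\rm ext}(\Rep(G))$ and $H^3(G,\mathbb{G}_m)$ out of Theorem \ref{mainnew}, and then to check compatibility with the group laws. Define $\Psi\colon H^3(G,\mathbb{G}_m)\to\mathcal{M}_{\rm ext}(\Rep(G))$ by sending $\omega$ to the class of $(\Z(\Coh(G,\omega)),\iota_\omega)$, where $\iota_\omega\colon\Rep(G)\hookrightarrow\Z(\Coh(G,\omega))$ is the canonical Tannakian Lagrangian embedding; this is well defined on cohomology classes, since a coboundary alters the associativity constraint of $\Coh(G,\omega)$ by an inner twist that is the identity on objects, inducing a braided tensor equivalence of centers commuting with the $\iota_\omega$. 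Conversely, given a pair $(\D,\iota)$ (so $\iota(\Rep(G))\subseteq\D$ is Lagrangian), Theorem \ref{mainnew} produces a class $\omega$ and a braided tensor equivalence $\D\cong\Z(\Coh(G,\omega))$; I set $\Phi(\D,\iota)=\omega$, where $\omega$ is pinned down by requiring the equivalence to carry the fixed embedding $\iota$ to $\iota_\omega$, equivalently by reading off the associativity class of the de-equivariantization of $\D$ along $\iota(\Rep(G))$.

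First I would check that $\Phi$ is well defined on equivalence classes and inverse to $\Psi$. If $\psi\colon\D_1\xrightarrow{\cong}\D_2$ is braided with $\psi\circ\iota_1=\iota_2$, then $\psi$ carries $\iota_1(\Rep(G))$ to $\iota_2(\Rep(G))$ compatibly with the identifications of the acting group schemes with $G$, hence descends to a $G$-equivariant tensor equivalence of the de-equivariantizations; because $\iota$ is fixed there is no residual $\Aut(G)$-ambiguity, so $\omega_1=\omega_2$ in $H^3(G,\mathbb{G}_m)$. The identity $\Phi\circ\Psi=\id$ is immediate from the construction of $\Psi$, and $\Psi\circ\Phi=\id$ is a restatement of Theorem \ref{mainnew} together with the fact that its equivalence may be chosen to send $\iota$ to $\iota_\omega$ --- which is exactly how $\omega$ was extracted. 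Thus $\Phi$ is a bijection of sets.

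It remains to show $\Phi$ is a homomorphism for the group law on $\mathcal{M}_{\rm ext}(\Rep(G))$ recalled from \cite{LKW}, under which the product of $(\D_1,\iota_1)$ and $(\D_2,\iota_2)$ is the relative Deligne product $\D_1\boxtimes_{\Rep(G)}\D_2$ --- the condensation of $\D_1\boxtimes\D_2$ by the canonical connected commutative algebra $\Fun\big((G\times G)/\Delta(G)\big)\in\Rep(G)\boxtimes\Rep(G)$, which collapses $\Rep(G)\boxtimes\Rep(G)$ onto the diagonal $\Rep(G)$ and again yields a minimal extension of the diagonal $\Rep(G)$. Writing $\pi_1,\pi_2\colon G\times G\to G$ for the projections and $\Delta\colon G\hookrightarrow G\times G$ for the diagonal, I would use the identifications $\D_i\cong\Z(\Coh(G,\omega_i))$ to get
$$\D_1\boxtimes\D_2\cong\Z\big(\Coh(G,\omega_1)\boxtimes\Coh(G,\omega_2)\big)\cong\Z\big(\Coh(G\times G,\pi_1^*\omega_1\cdot\pi_2^*\omega_2)\big),$$
and then establish that condensation along $\Delta$ restricts the associativity class by $\Delta^*$, so that
$$\D_1\boxtimes_{\Rep(G)}\D_2\cong\Z\big(\Coh(G,\Delta^*(\pi_1^*\omega_1\cdot\pi_2^*\omega_2))\big)=\Z\big(\Coh(G,\omega_1\cdot\omega_2)\big),$$
using $\pi_i\circ\Delta=\id_G$. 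Compatibly with the diagonal embeddings this gives $\Phi((\D_1,\iota_1)\cdot(\D_2,\iota_2))=\omega_1\cdot\omega_2$, with unit the untwisted double $\Z(\Coh(G))=\Rep(D(G))$ and inverses furnished by the reverse braiding, matching $\omega\mapsto\omega^{-1}$.

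The main obstacle I anticipate is the base-change statement underlying the last display: that for a closed embedding $\phi\colon H\hookrightarrow K$ of finite group schemes and $\mu\in H^3(K,\mathbb{G}_m)$, condensing $\Z(\Coh(K,\mu))$ by the algebra $\Fun(K/\phi(H))$ produces $\Z(\Coh(H,\phi^*\mu))$ as a braided tensor category, compatibly with the canonical Tannakian Lagrangian embeddings. In the fusion case this is standard anyon condensation, but here one must run the argument in the non-semisimple, positive-characteristic setting, where $\Fun(K/\phi(H))$ is generally not separable (e.g. local, as for $\alpha_p$), so the de-equivariantization/condensation has to be handled via the finite (non-fusion) machinery of \cite{G2}; the delicate point is to make every identification $G$-equivariantly and compatibly with the diagonal embedding, so that the associators genuinely restrict along $\Delta^*$ without introducing a spurious coboundary. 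Granting this lemma, the remaining group axioms reduce formally to those of $H^3(G,\mathbb{G}_m)$.
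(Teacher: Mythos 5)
Your route is the paper's own: use Theorem \ref{mainnew} to set up a bijection between $\mathcal{M}_{\rm ext}(\Rep(G))$ and $H^3(G,\mathbb{G}_m)$ (with the fixed embedding $\iota$ pinning down $\omega$), and then verify the homomorphism property by realizing the \cite{LKW} product of $(\D_1,\iota_1)$ and $(\D_2,\iota_2)$ as condensation of $\D_1\boxtimes\D_2\cong\Z(\Coh(G\times G,\omega_1\times\omega_2))$ along the diagonal algebra $B=\O(G\times G/\Delta(G))$. The problem is the step you explicitly defer (``granting this lemma''): that taking local $B$-modules in $\Z(\Coh(G\times G,\omega_1\times\omega_2))$ produces $\Z(\Coh(G,\omega_1\omega_2))$, i.e.\ that condensation along $\Delta$ restricts the associator by $\Delta^*$. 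This is not a peripheral base-change fact one can bracket; it is the entire mathematical content of the homomorphism property, and hence of the corollary beyond what Theorem \ref{mainnew} already gives. As written, your argument establishes the set-theoretic bijection but leaves the group law conditional, so there is a genuine gap.

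For comparison, the paper closes exactly this gap as follows. First, the non-semisimplicity issue you raise is handled by Theorem \ref{exactalg}: $B$ is an exact commutative algebra in the Tannakian subcategory $\Rep(G\times G)=\Rep(G)\boxtimes\Rep(G)$ (Lemma \ref{gdiag}), hence exact in the ambient non-degenerate category, with no separability needed. Second, by Lemma \ref{centergomega}, $A:=\O(G\times G)=\mathcal{I}(\mathbf{1})$ is the canonical Lagrangian algebra of $\Z(\Coh(G\times G,\omega_1\times\omega_2))$, and one adapts \cite[Example 4.11(ii)]{DMNO} to this exact setting: exact subalgebras $B\subset A$ correspond bijectively to tensor subcategories of $\Coh(G\times G,\omega_1\times\omega_2)$, where $B$ corresponds to the image of the free module functor $-\ot_B A$ on $\Mod^0(B)$, and $\Mod^0(B)$ is identified with the center of that image. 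Computing the image for $B=\O(G\times G/\Delta(G))$ gives $\Coh(\Delta(G),(\omega_1\times\omega_2)_{\mid\Delta(G)})\cong\Coh(G,\omega_1\omega_2)$, whence $\Mod^0(B)\cong\Z(\Coh(G,\omega_1\omega_2))$ as braided tensor categories. Supplying this correspondence argument (or an equivalent one) is what your proposal still needs; the rest of what you wrote is consistent with the paper's proof.
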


The structure of the paper is as follows. In Section \ref{prelim} we recall some necessary background on finite tensor categories and their exact module categories (\ref{FTC}, \ref{exmodcat}), exact sequences of finite tensor categories (\ref{esftc}), non-degenerate braided tensor categories and exact commutative algebras in finite braided tensor categories (\ref{Centralizers and Lagrangian subcategories}, \ref{Exact commutative algebras}), finite group schemes, de-equivariantization, the finite tensor categories ${\rm Coh}(G,\omega)$ and their centers $\Z({\rm Coh}(G,\omega))$, and the group $\mathcal{M}_{\rm ext}(\Rep(G))$ (\ref{gsch} - \ref{mextrepg}).
  
Sections \ref{prrofmainnew}, \ref{prrofmain1} and \ref{proofmaincor} are devoted to the proofs of Theorem \ref{mainnew}, Corollary \ref{main1} and Corollary \ref{maincor}, respectively. 

Finally, in Section \ref{examples} we give some examples. In particular, we use the results of \cite{EG2} to show that the group $\mathcal{M}_{\rm ext}(\Rep(\mu_p))$ is trivial (see Example \ref{exampl1}) and the group $\mathcal{M}_{\rm ext}(\Rep(\alpha_p))$ is infinite (see Example \ref{exampl2}), and use \cite{FP} to conclude that if $\O(\Gamma)^*=u(\g)$ for a semisimple restricted $p$-Lie algebra $\g$, then $\mathcal{M}_{\rm ext}(\Rep(\Gamma))$ is the trivial group and $\mathcal{M}_{\rm ext}(\Rep(\Gamma\times \alpha_p))\cong \g^{*(1)}$ (see Examples \ref{exampl3} and \ref{exampl4}).

\bigskip
\noindent {\bf Acknowledgment.} I am grateful to Pavel Etingof for reading the manuscript and suggesting Example \ref{exampl4}.

\section{Preliminaries}\label{prelim}
Let $k$ be an algebraically closed field with positive characteristic $p>0$, and let $\Vect$ denote the category of finite dimensional $k$-vector spaces.

Let $\B$ be any $k$-linear category. Recall that for every $V\in {\rm Vec}$, we have a natural functor 
$$V\ot - :\B\to \B,\,\,\,X\mapsto V\ot X.$$ Namely, for 
$X\in \B$, the object $V\ot X$ is uniquely defined through the Yoneda lemma by the formula 
$$\Hom_{\B}(Y,V\ot X)\cong V\ot _k\Hom_{\B}(Y,X)$$
(and the existence of this object is checked by choosing a basis in $V$).

\subsection{Finite tensor categories}\label{FTC} (See \cite[Chapter 6]{EGNO}.)  
Let $\B$ be a {\em finite} tensor category over $k$. Let $\mathcal{O}(\B)$ denote the complete set of isomorphism classes of simple objects of $\B$. 
Let ${\rm Gr}(\B)$ denote the Grothendieck ring of $\B$, and let ${\rm Pr}(\B)$ denote
the group of isomorphism classes of projective objects in $\B$. Recall that ${\rm Pr}(\B)$ is a bimodule over ${\rm Gr}(\B)$, and we have a natural homomorphism $\tau:{\rm Pr}(\B)\to {\rm Gr}(\B)$.

Recall \cite[Subsection 2.4]{EO} that we have a character 
$$\FPdim: {\rm Gr}(\B)\to \mathbb{R},\,\,\,X\mapsto \FPdim(X),$$ attaching to $X\in \B$ its {\em Frobenius-Perron dimension}. 
Recall also that 
$$\FPdim(\B):=\sum_{X\in \mathcal{O}(\B)}\FPdim(X)\FPdim(P(X)),$$ 
where $P(X)$ denotes the projective cover of $X$ and $\FPdim(P(X))$ is defined to be $\FPdim(\tau(P(X)))$.

Finally, recall that $\A$ is a {\em tensor subcategory} of $\B$ if $\A$ is a full subcategory of $\B$, closed under taking subquotients, tensor products, and duality \cite[Definition 4.11.1]{EGNO}.

\subsection{Exact module categories}\label{exmodcat}
Retain the notation from \ref{FTC}. Recall that a left $\B$-module category $\M$ is said to be {\em indecomposable} if it is not a direct sum of two nonzero module categories, and {\em exact} if $P\ot M$ is projective in $\M$, for every $P\in {\rm Pr}(\B)$ and $M\in\M$. 

Let $\mathscr{M}$ be an indecomposable exact 
$\B$-module category\footnote{Here and below, by ``a module category'' we will mean a {\em left} module category, unless otherwise specified.} (such a category is always finite \cite[Lemma 3.4]{EO}). 
Let $\Bnd(\M)$ be the abelian category of {\em right exact} endofunctors of
$\mathscr{M}$, and let $\B_{\mathscr{M}}^*:=
\Bnd_{\B}(\mathscr{M})$ be the dual category of $\B$ with respect to $\M$, i.e., the category of  
$\B$-linear (necessarily exact) endofunctors of $\mathscr{M}$. Recall that
composition of functors turns $\Bnd(\M)$ into a {\em monoidal}
category, and $\B_{\mathscr{M}}^*$ into a finite tensor category.

Let $A$ be an algebra object in $\B$, and let $\mathscr{M}:={\rm Mod}(A)_{\B}$ denote the category of {\em right} $A$-modules in $\B$. Recall that $\mathscr{M}$ has a natural structure of a left $\B$-module category, and $A$ is called {\em exact} if $\mathscr{M}$ is indecomposable and exact over $\B$ (see \cite[Section 7.5]{EGNO}). Recall  \cite[Proposition 7.11.6]{EGNO} that in this case, $\B_{\mathscr{M}}^*\cong{\rm Bimod}_{\B}(A)^{{\rm op}}$ is a finite tensor category (where ${\rm Bimod}_{\B}(A)^{{\rm op}}$ is ${\rm Bimod}_{\B}(A)$ equipped with the opposite tensor product).

The following result will be crucial for us.

\begin{theorem}\label{exactalg} \cite[Corollary 12.4]{EtO}
If $\C$ is a tensor subcategory of $\B$, and $A$ is an exact algebra in $\C$, then $A$ is an exact algebra in $\B$. \qed
\end{theorem}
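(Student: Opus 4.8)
The plan is to unwind the definition of an exact algebra and establish the two requirements separately: that $\mathscr{M}:={\rm Mod}(A)_\B$ is \emph{indecomposable} and \emph{exact} as a $\B$-module category, given that ${\rm Mod}(A)_\C$ is indecomposable and exact over $\C$. Indecomposability is the easy half and is intrinsic to the algebra $A$: decompositions of ${\rm Mod}(A)_\B$ are in bijection with central idempotents $e:\be\to A$ of $A$ (equivalently, with factorizations $A\cong A_1\oplus A_2$ as algebras). Since $\C$ is a \emph{full} tensor subcategory of $\B$ and $A,\be\in\C$, we have $\Hom_\C(\be,A)=\Hom_\B(\be,A)$, and the multiplication of $A$ is the same morphism in either category; hence $A$ has exactly the same central idempotents whether viewed in $\C$ or in $\B$. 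Therefore ${\rm Mod}(A)_\B$ is indecomposable if and only if ${\rm Mod}(A)_\C$ is, and the hypothesis supplies the former.

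For exactness I would first record a convenient reduction. Writing $U:{\rm Mod}(A)_\B\to\B$ for the (exact) forgetful functor and $-\ot A$ for its left adjoint, note that $-\ot A$ sends projectives of $\B$ to projective objects of ${\rm Mod}(A)_\B$, and that for any $P\in{\rm Pr}(\B)$ and any module $M$ the underlying object $U(P\ot M)=P\ot U(M)$ is projective in $\B$, because projective objects form a tensor ideal. Consequently it \emph{suffices} to prove the following: every $A$-module $M$ in $\B$ whose underlying object $U(M)$ is projective in $\B$ is itself a projective object of ${\rm Mod}(A)_\B$. For such $M$ the free module $U(M)\ot A$ is projective and the action map $\varepsilon_M:U(M)\ot A\to M$ is a module epimorphism; hence $M$ is projective if and only if $\varepsilon_M$ splits as a map of $A$-modules, i.e.\ if and only if $M$ is a retract of $U(M)\ot A$. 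Thus the whole of exactness is reduced to the splitting of these action maps, which is evident for the modules $Q\ot A$ with $Q$ projective (the projective generators), so the content is to treat arbitrary such $M$.

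To bring in the hypothesis I would transport this splitting statement from $\C$ to $\B$ along the inclusion $\iota:\C\hookrightarrow\B$, an exact tensor functor with adjoints $\iota^{L}\dashv\iota\dashv\iota^{R}$ satisfying the projection formula $\iota^{R}(X\ot\iota Y)\cong\iota^{R}(X)\ot Y$; since $A\in\C$, these lift to adjoint functors between ${\rm Mod}(A)_\C$ and ${\rm Mod}(A)_\B$. The exactness of $A$ in $\C$ gives, by the reduction above applied in $\C$, a splitting of $U(N)\ot A\to N$ for every $A$-module $N$ in $\C$ with $U(N)\in{\rm Pr}(\C)$; the obstruction to splitting $\varepsilon_M$ in general lives in a relative $\Bxt^1$ computed by the bar complex $\cdots\to M\ot A\ot A\to M\ot A$, all of whose structure maps are morphisms between tensor powers of $A$ and $U(M)$. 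One alternative framing, likely the cleanest, is to identify ${\rm Mod}(A)_\B$ with the induced module category $\B\boxtimes_{\C}{\rm Mod}(A)_\C$ along $\iota$ and to invoke (or prove) that induction of an exact module category along a tensor functor is again exact.

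\textbf{The main obstacle.} The crux, which I expect to be genuinely hard, is precisely the passage from $\C$ to $\B$ in the splitting statement. The difficulty is that the projective objects of $\B$ — and hence the covering free modules $U(M)\ot A$ with $U(M)\in{\rm Pr}(\B)$ that one must split — need not lie in $\C$. Indeed $\B$ is \emph{not} an exact $\C$-module category in general (a projective of $\C$ need not be projective in $\B$), and a module $M$ with $U(M)\notin\C$ is never a retract of a module induced from $\C$; so one cannot reduce the splitting of $\varepsilon_M$ to $\C$ by a naive restriction/induction. What must be shown is that projectivity of an $A$-module is \emph{detected by data living in $\C$}, i.e.\ that the relevant relative homological obstructions are insensitive to enlarging $\C$ to $\B$ once $A$ and its module structure are fixed inside $\C$. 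Establishing this insensitivity — for which the fullness of $\C\subseteq\B$ and its closure under duals are the essential inputs — is the technical heart of the result.
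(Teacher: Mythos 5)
Your preliminary reductions are correct: indecomposability transfers because direct-sum decompositions of $\Mod(A)_{\B}$ correspond to algebra decompositions of $A$, hence to central idempotents in $\Hom(\be,A)$, a Hom-space that is the same computed in $\C$ or in $\B$ by fullness; and exactness of $\Mod(A)_{\B}$ is indeed equivalent to the assertion that every $A$-module in $\B$ whose underlying object is projective in $\B$ is projective as a module, i.e.\ that the action maps $U(M)\ot A\to M$ split for such $M$. But after this reformulation your argument simply stops: the transfer of the splitting property from $\C$ to $\B$ --- which you yourself label the ``technical heart'' --- is never carried out, so what you have is a correct restatement of the theorem, not a proof of it. Moreover, the one concrete route you propose is circular: the assertion that $\B\boxtimes_{\C}\Mod(A)_{\C}\cong\Mod(A)_{\B}$ is exact over $\B$ whenever $\Mod(A)_{\C}$ is exact over $\C$ \emph{is} the theorem, merely rewritten in module-category language; in the finite, non-fusion setting there is no prior result of this kind to invoke, so ``invoke (or prove)'' collapses to ``prove'', i.e.\ to exactly the missing step. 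The gap is substantive and not a matter of routine checking: in the fusion case one splits $U(M)\ot A\to M$ using a separability idempotent of $A$, but an exact algebra in positive characteristic need not be separable --- e.g.\ $A=\O(\alpha_p)\cong k[x]/(x^p)$ is local and non-semisimple, yet exact in $\Rep(\alpha_p)$ because $\Mod(\O(\alpha_p))_{\Rep(\alpha_p)}\cong\Vect$ --- so no soft splitting argument of that kind can close it.

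As for the comparison you were asked to withstand: the paper itself gives no argument for this statement; it is quoted without proof from \cite[Corollary 12.4]{EtO}, and the step you left open is precisely the content of the proof in that reference. So your write-up correctly isolates where the difficulty lies, and correctly diagnoses why naive restriction/induction along $\iota:\C\hookrightarrow\B$ fails (projectives of $\B$ need not lie in $\C$, and $\B$ need not be exact as a $\C$-module category), but it does not prove the theorem; to complete it you would need to reproduce the Etingof--Ostrik argument (or an independent one) for the exactness of induced module categories over finite tensor categories.
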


Let $\mathscr{M}$ be an exact indecomposable module category over $\B$. Fix a nonzero object $M\in \mathscr{M}$, and let $A:=\underline{\Bnd}_{\B}(M)$ be the internal Hom from $M$ to $M$ in the $\B$-module category $\mathscr{M}$. Recall that $A$ is an exact algebra in $\B$, and there is a canonical equivalence of $\B$-module categories $\mathscr{M}\cong \Mod(A)_{\B}$ (see, e.g., \cite[Theorem 7.10.1]{EGNO}).

\subsection{Exact sequences of finite tensor categories}\label{esftc} 
Let $\A,\B$ and $\C$ be finite tensor categories, and let $\iota:\A\xrightarrow{1:1}\B$ be an injective tensor functor. Let $\M$ be an indecomposable exact module category over $\A$, fix a nonzero object $M\in \mathscr{M}$, and consider the algebrta $A:=\underline{\Bnd}_{\A}(M)$ (see \ref{FTC}, \ref{exmodcat}). We have $\mathscr{M}=\Mod(A)_{\A}$, and $\B\boxtimes_{\A} \mathscr{M}=\Mod(A)_{\B}$.

Let $F:\B\to \C\boxtimes \Bnd(\M)$ be a surjective \footnote{I.e., any object of $\C\boxtimes \Bnd(\M)$ is a subquotient of $F(X)$ for some $X\in \B$.} monoidal functor, such that 
$\iota(\A)\subset \B$ is the subcategory consisting of all objects $X\in \B$ such that $F(X)\in \Bnd(\M)$.
Recall \cite{EG} that $F$ defines an {\em exact sequence of tensor categories
\begin{equation}\label{ES}
\A\xrightarrow{\iota} \B\xrightarrow{F} \C\boxtimes \Bnd(\M)
\end{equation}
with respect to $\mathscr{M}$},
if for every object $X\in \B$ there exists a subobject $X_0\subset X$ such that $F(X_0)$ 
is the largest subobject of $F(X)$ contained in $\Bnd(\M)\subset \C\boxtimes \Bnd(\M)$. Recall \cite[Theorem 2.9]{EG} that the functor $F$ induces on $\C\boxtimes \M$ a structure of an exact $\B$-module category.

\begin{theorem}\label{eqdefexseq} \cite[Theorems 3.4, 3.6]{EG}
The following are equivalent:
\begin{enumerate}
\item
(\ref{ES}) defines an exact sequence of finite tensor categories.
\item
$\FPdim(\B)=\FPdim(\A)\FPdim(\C)$.
\item
The natural functor $T: \B\boxtimes_{\A}\M\to \C\boxtimes \M$, given by 
$$
\B\boxtimes_{\A}\M\xrightarrow{F\boxtimes_{\A} \Id_{\M}} \C\boxtimes \End(\M)\boxtimes_{\A}\M=\C\boxtimes \M\boxtimes \A_{\M}^{*\rm op}\xrightarrow{\Id_{\C}\boxtimes \rho} \C\boxtimes \M,
$$ 
is an equivalence (where $\rho:\M\boxtimes \A_{\M}^{*\rm op} \to \M$ is the right action of $\A_{\M}^{*\rm op}$ on $\M$). \qed 
%(where $\rho:\M\boxtimes \A_{\M}^{*\rm op} \to \M$ is the right action of $\A_{\M}^{*\rm op}$ on $\M$).
\end{enumerate}
\end{theorem}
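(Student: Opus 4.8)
The plan is to route all three conditions through the single module functor $T$ and the multiplicativity of Frobenius--Perron dimension. First I would fix the objects carefully. By \ref{exmodcat} and \ref{esftc} we have $\B\boxtimes_{\A}\M=\Mod(A)_{\B}$ with $A=\underline{\End}_{\A}(M)$, and since $A$ is exact in $\A$, Theorem \ref{exactalg} shows that $A$ is exact in $\B$; hence $\B\boxtimes_{\A}\M$ is an exact $\B$-module category. On the other side, $F$ endows $\C\boxtimes\M$ with an exact $\B$-module structure (the cited Theorem 2.9 of \cite{EG}), and $T$ is by construction a $\B$-module functor between these two exact module categories. Thus the entire problem reduces to the following: when is a $\B$-module functor between exact $\B$-module categories an equivalence, and how does this relate to the defining condition of an exact sequence.

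Next I would carry out the dimension bookkeeping. Using the formula for the Frobenius--Perron dimension of a relative Deligne product together with $\FPdim(A)=\FPdim(\A)/\FPdim_{\A}(\M)$, one obtains $\FPdim_{\B}(\B\boxtimes_{\A}\M)=\FPdim(\B)\FPdim_{\A}(\M)/\FPdim(\A)$. On the other hand, since $F$ is a surjective monoidal functor it preserves Frobenius--Perron dimension, so the $\B$-module category $\C\boxtimes\M$ acquires Frobenius--Perron dimension $\FPdim(\C)\FPdim_{\A}(\M)$. Comparing the two expressions, the equality of the Frobenius--Perron dimensions of the source and target of $T$ is \emph{exactly} condition (2), namely $\FPdim(\B)=\FPdim(\A)\FPdim(\C)$.

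I would then show that $T$ is always dominant, i.e. that every object of $\C\boxtimes\M$ is a subquotient of an object in the image of $T$. This is inherited from the surjectivity of $F$: the composite defining $T$ begins with $F\boxtimes_{\A}\Id_{\M}$, and the action $\rho$ of $\A_{\M}^{*\rm op}$ is surjective, so dominance of $F$ forces dominance of $T$. With dominance in hand, I would appeal to the general fact for exact module categories over a finite tensor category that a dominant module functor is an equivalence precisely when the two module categories have the same Frobenius--Perron dimension (fully faithfulness of a dominant functor is detected by dimension). Combined with the previous paragraph this yields the equivalence (2)$\Leftrightarrow$(3).

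It remains to tie in condition (1), and this is where I expect the main obstacle. The definition of an exact sequence asks, for every $X\in\B$, for a subobject $X_0\subset X$ with $F(X_0)$ the largest subobject of $F(X)$ lying in $\End(\M)$. I would argue that this largest-subobject condition is precisely what prevents the fibers of $T$ from collapsing, i.e. what makes $T$ fully faithful, giving (1)$\Leftrightarrow$(3); equivalently, the condition produces for each $X$ a two-step filtration $X_0\subset X$ whose associated graded pieces are controlled by $\A$ and by the $\C$-part of $F(X)$, and summing Frobenius--Perron dimensions over a composition series yields $\FPdim(\B)=\FPdim(\A)\FPdim(\C)$, giving (1)$\Leftrightarrow$(2). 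The delicate point is to verify that the subobject $X_0$ exists functorially and that passing to the associated graded genuinely separates the ``$\A$-direction'' from the ``$\C$-direction'' without overcounting; establishing this correspondence between the filtration data and the exactness of the sequence is the technical heart of the argument, and it is what ultimately forces all three conditions to coincide.
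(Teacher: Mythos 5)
The paper itself offers no proof of this statement: it is quoted directly from \cite[Theorems 3.4, 3.6]{EG}, so a blind proof has to reconstruct the arguments of that paper, and yours does not, because its pivot is a false lemma. The ``general fact'' on which your equivalence (2)$\Leftrightarrow$(3) rests --- that a dominant module functor between exact module categories of equal Frobenius--Perron dimension is automatically an equivalence --- fails. Take $\B=\Vect$, $\M_1=\M_2=\Vect$, and $T=-\ot k^2$: this is a dominant $\B$-module functor between exact indecomposable module categories of the same Frobenius--Perron dimension, and it is not fully faithful. The analogous statement for surjective \emph{tensor} functors (\cite[Proposition 6.3.4]{EGNO}, which is what you are implicitly extrapolating) holds because tensor functors preserve Frobenius--Perron dimensions of objects; a module functor, by contrast, only rescales the dimension function on a module category (itself defined only up to a common scalar) by an arbitrary positive constant, so equality of total dimensions cannot detect full faithfulness. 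To repair this one must use the specific shape of $T$: it sends the generator $\mathbf{1}\boxtimes_{\A}M$ to $\mathbf{1}\boxtimes M$, hence is induced by an algebra homomorphism from $A=\underline{\Bnd}_{\A}(M)$, viewed in $\B$, to the internal End $\underline{\Bnd}_{\B}(\mathbf{1}\boxtimes M)$ computed in the exact $\B$-module category $\C\boxtimes\M$, and $T$ is an equivalence precisely when this algebra map is an isomorphism; the dimension count must then be carried out on these algebras, via the right adjoint of $F$. That is exactly how \cite{EG} proceeds, and your sketch never introduces this structure.

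The second gap is that everything involving condition (1) --- which you yourself call ``the technical heart'' --- is deferred rather than argued. The exactness condition supplies, for each $X\in\B$, a single subobject $X_0\subset X$ with $F(X_0)$ the largest subobject of $F(X)$ lying in $\Bnd(\M)$; it does not by itself yield a filtration of $X$ whose graded pieces separate an ``$\A$-direction'' from a ``$\C$-direction,'' and summing Frobenius--Perron dimensions over composition series does not obviously produce $\FPdim(\B)=\FPdim(\A)\FPdim(\C)$. In \cite{EG} (generalizing the arguments of Brugui\`eres and Natale for the case $\M=\Vect$), this step is established by showing that exactness amounts to normality of $F$ together with a projection-formula argument for the right adjoint $R$ of $F$, and by computing the Frobenius--Perron dimension of $R(\mathbf{1})$; no substitute for this machinery appears in your proposal. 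So neither nontrivial bridge is in place: one rests on a false general fact, and the other is acknowledged but not built.
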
  

Let $\mathscr{N}$ be an indecomposable exact module category over $\C$. 
Then $\mathscr{N}\boxtimes \mathscr{M}$ is an exact module category over 
$\C\boxtimes \Bnd(\M)$, and we have $(\C\boxtimes \Bnd(\M))_{\mathscr{N}\boxtimes \mathscr{M}}^*\cong\C_{\mathscr{N}}^*$. By \cite[Theorem 4.1]{EG}, the dual sequence to (\ref{ES}) with respect to $\N\boxtimes \M$ 
\begin{equation}\label{ES1}
\A_{\mathscr{M}}^*\boxtimes \Bnd(\mathscr{N})\xleftarrow{\iota^*} \B_{\mathscr{N}\boxtimes \mathscr{M}}^*\xleftarrow{F^*} \C_{\mathscr{N}}^*
\end{equation}
is exact with respect to $\N$.

\subsection{Lagrangian subcategories}
\label{Centralizers and Lagrangian subcategories}
 
Let $\D$ be a finite {\em braided} tensor category with braiding ${\rm c}$ (see, e.g., \cite[Chapter 8]{EGNO}), and let $\E\subset\D$ be a tensor subcategory of $\D$. Recall that two objects $X,Y\in\D$ {\em centralize} each other if 
\begin{equation}\label{squared braiding}
{\rm c}_{Y,X}\circ {\rm c}_{X,Y}= \Id_{X\otimes Y},
\end{equation}
and the (M\"{u}ger) {\em centralizer} of $\E$ is the tensor subcategory $\E'\subset\D$ consisting of all objects which centralize every object of $\E$ (see, e.g., \cite{DGNO}). The category $\D$ is called {\em non-degenerate} if $\D'=\Vect$. 

By \cite[Theorem 4.9]{S}, we have  
\begin{equation}\label{shimizu}
\FPdim(\E)\FPdim(\E')=\FPdim(\D)\FPdim(\D'\cap \E)
\end{equation}

If $\D$ is non-degenerate and $\E' = \E$, then $\E$ is called a {\em Lagrangian} subcategory of $\D$. Thus, a Lagrangian subcategory of $\D$ is a maximal symmetric tensor subcategory of $\D$. 

\subsection{Exact commutative algebras}\label{Exact commutative algebras}
Retain the notation from \ref{Centralizers and Lagrangian subcategories}. Let $A$ be an exact algebra in $\D$ (see \ref{exmodcat}), with multiplication and unit morphisms ${\rm m}_A$ and $u_A$, respectively. Recall that $A$ is {\em commutative} if ${\rm m}_A={\rm m}_A\circ {\rm c}_{A,A}$. 

Assume from now on that $A$ is an exact commutative algebra in $\D$. Consider the category $\C:=\Mod(A)_{\D}$ (see \ref{exmodcat}). Let $(X,{\rm m}_X)\in \C$, where $X\in\D$ and ${\rm m}_X:X\ot A\to X$ is the $A$-module structure morphism. 
Recall that the braiding on $\D$ defines on $X$ two structures of a {\em left} $A$-module as follows:
$$A\otimes X\xrightarrow{{\rm c}_{A,X}}X\otimes
A\xrightarrow{{\rm m}_X}X,\,\,\,\,\,\,A\otimes X\xrightarrow{{\rm c}_{X,A}^{-1}}X\otimes
A\xrightarrow{{\rm m}_X} X.$$ Both structures make $(X,{\rm m}_X)$ into an $A$-bimodule, denoted by $X_+$ and $X_-$, respectively. By identifying $\C$ with a subcategory of the finite tensor category ${\rm Bimod}_{\D}(A)$ (see \ref{exmodcat}) via the full embedding functor
$$
\C\xrightarrow{1:1} {\rm Bimod}_{\D}(A),\,\,\,(X,{\rm m}_X)\mapsto X_-,
$$
(see \cite[Section 8.8]{EGNO}), we see that $\C$ is a finite tensor category, with the tensor
product $\ot_A$. Namely, the tensor product of $X,Y\in \C$, denoted by $X\ot_A Y$, is given by $X\ot _A Y_-$ endowed with the structure map $\Id_X\ot {\rm m}_Y:X\ot _A Y_-\ot A\to X\ot _A Y_-$. 

Now let
\begin{equation}
\label{free module functor} F: \D \to \C,\,\,\, X\mapsto (X\otimes A,\Id_X\ot {\rm m}_A),
\end{equation}
be the {\em free $A$-module functor}, and let 
\begin{equation}
\label{adjoint to free module functor} I: \C \to \D,\,\,\, (X,{\rm m}_X)\mapsto X
\end{equation}
be the forgetful functor. 
Recall that $F,I$ are adjoint functors \cite[Lemma 7.8.12]{EGNO}, i.e., we have natural isomorphisms 
$$
\Hom_{\C}(F(X),Y)\cong \Hom_{\D}(X,I(Y)),\,\,\,X\in\D,\,Y\in\C.
$$ 

Recall that $F$ is a surjective {\em tensor} functor \cite{EGNO}, with tensor structure 
$J=\{J_{X,Y}\mid X,Y\in\D\}$, where 
$$J_{X,Y}:F(X\ot Y)\xrightarrow{\cong}F(X)\ot_A F(Y)$$ is the $\C$-isomorphism given by the composition
\begin{equation}\label{J}
X\ot Y\ot A\xrightarrow{\Id_{X}\ot u_A\ot \Id_{A}} X\ot A\ot_A Y\ot A.
\end{equation}

\begin{lemma}\label{dimmodca}
We have $\FPdim(\C)=\FPdim(\D)/\FPdim(A)$.
\end{lemma}

\begin{proof}
Follows from \cite[Lemma 6.2.4]{EGNO}.
\end{proof}

\subsection{Finite group schemes}\label{gsch} (See, e.g., \cite{W}.) Let $G$ be a finite group scheme over $k$, with  coordinate algebra $\O(G)$. Then $\O(G)$ is a finite dimensional commutative Hopf algebra, and 
its representation category ${\rm Coh}(G)$ is a finite tensor category over $k$.

We will denote the dimension of $\O(G)$ by $|G|$.

Let $\Rep(G)=\Corep(\O(G))$ denote the representation category of $G$ over $k$. Recall that $\Rep(G)$ is a finite {\em Tannakian} category, that is, a finite \emph{symmetric} tensor category such that the forgetful functor $\Rep(G)\to \Vect$ is symmetric.

Set $\E:=\Rep(G)$. Let ${\rm I}:\E\to \E\boxtimes\E$ be the adjoint functor of the tensor functor $\E\boxtimes\E\xrightarrow{\ot}\E$. Recall that ${\rm I}({\bf 1})$ is an exact commutative algebra in $\E\boxtimes\E$, and $\E\cong\Mod({\rm I}({\bf 1}))_{\E\boxtimes\E}$.

Let $\Delta:G\xrightarrow{1:1} G\times G$ be the diagonal morphism.

\begin{lemma}\label{gdiag}
We have ${\rm I}({\bf 1})=\O(G\times G/\Delta(G))$.
\end{lemma}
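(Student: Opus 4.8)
The plan is to recognize both the functor $\ot$ and its adjoint ${\rm I}$ purely in terms of the group scheme $G$, and then read off ${\rm I}({\bf 1})$ as the coordinate ring of a homogeneous space. First I would use the canonical identification $\E\boxtimes\E=\Rep(G)\boxtimes\Rep(G)\cong\Rep(G\times G)=\Corep(\O(G)\ot\O(G))$, coming from $\O(G\times G)=\O(G)\ot\O(G)$. Under this identification the diagonal group homomorphism $\Delta:G\xrightarrow{1:1}G\times G$ induces on coordinate rings the multiplication map $\Delta^*=\text{m}_{\O(G)}:\O(G)\ot\O(G)\to\O(G)$, and the tensor product functor $\E\boxtimes\E\xrightarrow{\ot}\E$ is exactly the restriction functor $\Delta^*:\Rep(G\times G)\to\Rep(G)$ (the tensor product $V\ot W$ of two $G$-representations is the external product $V\boxtimes W\in\Rep(G\times G)$ pulled back along $\Delta$). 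This structural observation is the heart of the matter; everything else is the computation of a known adjoint.

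Since ${\rm I}$ is the right adjoint of $\ot=\Delta^*$, it is identified with coinduction $\text{Coind}^{G\times G}_{\Delta(G)}$ from the closed subgroup scheme $\Delta(G)\cong G$ up to $G\times G$. I would then compute ${\rm I}({\bf 1})=\text{Coind}^{G\times G}_{\Delta(G)}({\bf 1})$ at the level of comodules as the cotensor product ${\bf 1}\,\square_{\O(G)}\,(\O(G)\ot\O(G))$, where $\O(G)\ot\O(G)=\O(G\times G)$ is viewed as a left $\O(G)$-comodule via $(\text{m}_{\O(G)}\ot\Id)\circ\Delta_{\O(G\times G)}$ and as a right $\O(G\times G)$-comodule via its regular coaction. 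This cotensor product is the space of left-coinvariants, which unwinds to the functions $F\in\O(G\times G)$ satisfying $F(gx,gy)=F(x,y)$, i.e. the invariants $\O(G\times G)^{\Delta(G)}=\O\big((G\times G)/\Delta(G)\big)$; the commuting regular coaction restricts to this subspace and realizes the asserted $G\times G$-comodule (equivalently $G\times G$-representation) structure.

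To finish I would verify that the algebra structure on ${\rm I}({\bf 1})$ produced by the (lax monoidal) right adjoint ${\rm I}$ agrees with pointwise multiplication of functions on $\O\big((G\times G)/\Delta(G)\big)$. This is immediate once one observes that the left-coinvariants form a subalgebra of $\O(G\times G)$ and that the lax structure map of ${\rm I}$ is induced by the multiplication of $\O(G\times G)$, so that ${\rm I}({\bf 1})$ is literally the invariant subalgebra with its inherited product.

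The step I expect to require the most care is the passage from the abstract categorical adjoint to a concrete coinduction formula in positive characteristic, since here $\E=\Rep(G)$ need not be semisimple and $G$ need not be smooth: I must make sure that $\Delta(G)$ is a genuine closed subgroup scheme of $G\times G$, that the fppf quotient $(G\times G)/\Delta(G)$ exists and is affine with $\O\big((G\times G)/\Delta(G)\big)=\O(G\times G)^{\Delta(G)}$, and that the right adjoint is computed by the cotensor product above (coinduction) rather than by induction, the two being genuinely different outside the finite-group case. Finally I would note the harmless left/right convention: the left-diagonal coinvariants describe $\Delta(G)\backslash(G\times G)$, which is identified with $(G\times G)/\Delta(G)$ through inversion on $G\times G$, matching the notation in the statement.
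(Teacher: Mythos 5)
Your proposal is correct and takes essentially the same route as the paper: the paper's entire proof is the identification of the tensor functor $\E\boxtimes\E\xrightarrow{\ot}\E$ with restriction $\Delta^*:\Rep(G\times G)\to\Rep(G)$ along the diagonal, after which "the claim follows." You simply make explicit what the paper leaves implicit --- that the adjoint is coinduction, computed by a cotensor product whose value on ${\bf 1}$ is the invariant subalgebra $\O(G\times G)^{\Delta(G)}=\O\bigl((G\times G)/\Delta(G)\bigr)$ --- and your attention to the scheme-theoretic quotient and the left/right convention is sound but not a departure from the paper's argument.
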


\begin{proof}
Since the tensor functor $\E\boxtimes\E\xrightarrow{\ot}\E$ coincides with the functor
$$\E\boxtimes\E=\Rep(G\times G)\xrightarrow{\Delta^*} \Rep(G)=\E,$$
the claim follows.
\end{proof}

\subsection{De-equivariantization}\label{deequiv}
Retain the notation of \ref{Centralizers and Lagrangian subcategories}-\ref{gsch}. Assume $\D$ contains  
$\E:=\Rep(G)$ as a Tannakian subcategory. 

Let $A:=\O(G)$. The group scheme $G$
acts on $A$ via {\em left} translations, making $A$ a {\em commutative}
algebra in $\E$. Thus, $A$ is a commutative
algebra in $\D$, with $\FPdim(A)=|G|$. 

Recall that $\Mod(A)_{\E}$ is equivalent to the standard $\E$-module category $\Vect$. Thus, $\Mod(A)_{\E}$ is exact, so $A$ is exact in $\E$. Thus by Theorem \ref{exactalg}, $A$ is exact in $\D$, so $\C:=\Mod(A)_{\D}$ and $\D^*_{\C}={\rm Bimod}_{\D}(A)^{{\rm op}}$ are finite tensor categories (see \ref{exmodcat}).
 
Let $\C^{0}=\Mod^{0}(A)_{\D}$ denote the subcategory of $\C$ consisting of all objects $(X,{\rm m}_X)$ such that ${\rm m}_X={\rm m}_X\circ {\rm c}_{A,X}\circ {\rm c}_{X,A}$ (i.e., dyslectic, or local, $A$-modules). Recall that $\C^{0}$ is a tensor subcategory of $\C$, which is moreover braided with the braiding inhereted from $\D$. 

\subsection{The categories ${\rm Coh}(G,\omega)$ and $\Z({\rm Coh}(G,\omega))$}\label{gscth} Retain the notation of \ref{gsch}, and let $\omega\in H^3(G,\mathbb{G}_m)$ be a normalized $3$-cocycle. That is, $\omega\in \mathscr{O}(G)^{\ot 3}$ is an
invertible element satisfying the equations
$$(\Id \ot \Id \ot \Delta)(\omega)(\Delta\ot \Id \ot \Id)(\omega)=(1\ot \omega)(\Id \ot \Delta\ot \Id )(\omega)(\omega\ot 1),$$
$$(\varepsilon\ot \Id \ot \Id )(\omega)=(\Id \ot \varepsilon\ot \Id)(\omega)=(\Id \ot \Id \ot \varepsilon)(\omega)=1.$$
Recall \cite[Section 5]{G2} that the category ${\rm Coh}(G,\omega)$ is just ${\rm Coh}(G)$ as abelian categories, equipped with the same tensor product, but with associativity constraint given by the action of $\omega$. Then ${\rm Coh}(G,\omega)$ is a finite {\em pointed} (i.e., every simple object is invertible) tensor category, and the group of its invertible objects is isomorphic to $G(k)$.

Let $\Z({\rm Coh}(G,\omega))$ denote the center of ${\rm Coh}(G,\omega)$ (see, e.g., \cite{EGNO}, \cite[Section 5]{G2}). By \cite[Theorem 4.2]{S}, $\Z({\rm Coh}(G,\omega))$ is a finite {\em non-degenerate} braided tensor category (see \ref{Centralizers and Lagrangian subcategories}). Recall moreover that $\Z({\rm Coh}(G,\omega))\cong \Rep(D^{\omega}(G))$ as braided tensor categories, where $D^{\omega}(G)$ is a quasi-Hopf algebra, called the twisted double of $G$.

Note that for every $\omega_1,\omega_2\in H^3(G,\mathbb{G}_m)$, we have
\begin{equation}\label{tenprofcenters}
\Z(\Coh(G,\omega_1))\boxtimes \Z(\Coh(G,\omega_2))\cong
\Z(\Coh(G\times G,\omega_1\times \omega_2))
\end{equation}
as braided tensor categories.

Let
$$\mathcal{F}:\Z({\rm Coh}(G,\omega))\twoheadrightarrow {\rm Coh}(G,\omega)$$
be the {\em forgetful functor}. The following lemma is well known. 

\begin{lemma}\label{centergomega0}
The tensor subcategory of $\Z({\rm Coh}(G,\omega))$ consisting of all objects mapped to $\Vect$ under $\mathcal{F}$ is tensor equivalent to $\Rep(G)$. Thus, $\Z({\rm Coh}(G,\omega))$ canonically contains $\Rep(G)$ as a Tannakian Lagrangian subcategory. \qed
\end{lemma}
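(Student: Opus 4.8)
The plan is to establish the lemma through the explicit Hopf-algebraic model of the center. Recall that $\Coh(G,\omega)=\Rep(\O(G))$ as an abelian category, where $\O(G)$ is regarded as a quasi-Hopf algebra whose coassociator is $\omega$, and that under the equivalence $\Z(\Coh(G,\omega))\cong\Rep(D^{\omega}(G))$ the twisted double $D^{\omega}(G)$ contains $\O(G)$ as a subalgebra, with the forgetful functor $\mathcal{F}$ identified with restriction of $D^{\omega}(G)$-modules to $\O(G)$-modules. Since the unit object $\be$ of $\Coh(G,\omega)$ is the one-dimensional $\O(G)$-module on which $\O(G)$ acts through the counit $\eps$, and $\Vect\subset\Coh(G,\omega)$ is precisely its isotypic component, an object of $\Z(\Coh(G,\omega))$ lies in $\mathcal{F}^{-1}(\Vect)$ if and only if, as an $\O(G)$-module, $\O(G)$ acts on it through $\eps$.

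First I would identify this full subcategory with $\Rep(G)$. On a $D^{\omega}(G)$-module on which $\O(G)$ acts through $\eps$, the cross relations defining $D^{\omega}(G)$ collapse: because the $\O(G)$-grading is concentrated at the identity and $\omega$ is normalized, all $\omega$-factors evaluate trivially and the twisted multiplication on the dual factor reduces to the untwisted one, so the remaining module structure is exactly that of an $\O(G)^{*}$-module, i.e.\ an $\O(G)$-comodule, i.e.\ an object of $\Corep(\O(G))=\Rep(G)$. Conversely every $V\in\Rep(G)$ produces such a module, namely $V$ regarded as an $\O(G)$-module via $\eps$ and equipped with the half-braiding coming from its $G$-action, and the construction is manifestly compatible with tensor products, giving a tensor equivalence between $\mathcal{F}^{-1}(\Vect)$ and $\Rep(G)$.

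Next I would check that this subcategory is Tannakian with its inherited braiding. For two objects supported in the identity component, the half-braiding with a sheaf supported at $g$ is implemented by the action of $g$; evaluating at $g=e$ shows that the half-braidings of two such objects compose to the identity, so the induced braiding is the flip and the subcategory is symmetric. Being symmetric tensor equivalent to $\Rep(G)$ with its forgetful fiber functor to $\Vect$, the subcategory is Tannakian and is identified with $\Rep(G)$ as symmetric tensor categories. Finally, to see it is Lagrangian I would use that $\Z(\Coh(G,\omega))$ is non-degenerate and that $\FPdim(\Z(\Coh(G,\omega)))=\FPdim(\Coh(G,\omega))^{2}=|G|^{2}$, together with $\FPdim(\Rep(G))=|G|$: the M\"uger centralizer formula (\ref{shimizu}) then gives $\FPdim(\Rep(G)')=|G|=\FPdim(\Rep(G))$, and since a symmetric subcategory satisfies $\Rep(G)\subseteq\Rep(G)'$, equality of Frobenius--Perron dimensions forces $\Rep(G)'=\Rep(G)$.

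The main obstacle is that $G$ is an arbitrary finite group scheme, so $\O(G)$ may be nonreduced and $\Coh(G,\omega)$ is generally far from semisimple; one cannot argue pointwise over $G(k)$ as in the classical fusion case. Consequently the identification ``half-braiding concentrated at the identity $=$ genuine $G$-action'' and the vanishing of the $\omega$-twist must be carried out at the level of the Hopf algebra $\O(G)$ and its comodules, using the normalization of $\omega$ and the local structure of $\O(G)$ at the augmentation ideal, rather than by restriction to closed points. Verifying that the inherited braiding is the symmetric flip likewise requires working with the coalgebra structure supported at the identity component instead of with individual grouplike simples.
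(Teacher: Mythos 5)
The paper offers no proof of this lemma at all: it is declared ``well known'' and the statement carries a \verb|\qed|, so there is nothing to match your argument against line by line; the relevant question is whether your argument is sound, and it is. Your route is the standard folklore one, correctly adapted to the finite (non-fusion) setting: identify $\Z(\Coh(G,\omega))$ with $\Rep(D^{\omega}(G))$, i.e.\ with ($\omega$-twisted) Yetter--Drinfeld modules over $\O(G)$, note that $\mathcal{F}^{-1}(\Vect)$ consists exactly of the objects on which the subalgebra $\O(G)\subset D^{\omega}(G)$ acts through $\eps$, use the normalization of $\omega$ (every twisting factor has a slot hit by $\eps$) to see that the residual structure is an untwisted $\O(G)$-comodule, i.e.\ an object of $\Rep(G)$, and then deduce Lagrangian-ness from $\FPdim(\Z(\Coh(G,\omega)))=|G|^{2}$, $\FPdim(\Rep(G))=|G|$ and Shimizu's identity (\ref{shimizu}) together with the fact that a tensor subcategory of equal Frobenius--Perron dimension coincides with its ambient category; this use of (\ref{shimizu}) is precisely the right replacement for the dimension count available in the fusion case. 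Two observations dissolve most of the scheme-theoretic worry you flag at the end. First, the symmetry of the inherited braiding needs no evaluation at points: writing the braiding of Yetter--Drinfeld modules as $x\ot y\mapsto x_{(-1)}\cdot y\ot x_{(0)}$ (coaction on $X$, action on $Y$), if $Y$ lies in $\mathcal{F}^{-1}(\Vect)$ the action is through $\eps$ and the counit axiom collapses this map to the flip on the nose, so for two such objects both braidings are flips and their composite is the identity. Second, the Yetter--Drinfeld compatibility between the trivial $\O(G)$-action and an arbitrary coaction reduces to $hx_{(-1)}\ot x_{(0)}=x_{(-1)}h\ot x_{(0)}$, which holds automatically because $\O(G)$ is commutative (with the $\omega$-corrections again killed by normalization), so the converse direction of your equivalence requires no computation either. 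With these two points your outline closes up into a complete proof.
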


Now let  
$$\mathcal{I}:{\rm Coh}(G,\omega)\to \Z({\rm Coh}(G,\omega))$$
be the adjoint functor of $\mathcal{F}$.

\begin{lemma}\label{centergomega}
The following hold:
\begin{enumerate}
\item
$\mathcal{I}({\bf 1})=\O(G)$ as objects of $\Rep(G)$.
\item
$\mathcal{I}({\bf 1})=\O(G)$ is an exact commutative algebra in $\Z({\rm Coh}(G,\omega))$.
\item
$\mathcal{I}$ induces a tensor equivalence 
$$\mathcal{I}:{\rm Coh}(G,\omega)\xrightarrow{\cong} \Mod(\O(G))_{\Z({\rm Coh}(G,\omega))}.$$
\item
$\mathcal{I}\circ \mathcal{F}$ coincides with the free $\O(G)$-module functor
$$F:\Z({\rm Coh}(G,\omega))\twoheadrightarrow \Mod(\O(G))_{\Z({\rm Coh}(G,\omega))},\,\,\,X\mapsto X\ot \O(G).$$
\end{enumerate}
\end{lemma}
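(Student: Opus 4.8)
The plan is to prove the four claims of Lemma~\ref{centergomega} in order, exploiting the structure already assembled in \ref{Exact commutative algebras} and \ref{gscth}. First I would establish (1). The forgetful functor $\mathcal{F}:\Z({\rm Coh}(G,\omega))\twoheadrightarrow {\rm Coh}(G,\omega)$ is a surjective tensor functor with adjoint $\mathcal{I}$, so by the standard description of the central adjunction the object $\mathcal{I}(\mathbf{1})$ is computed as the ``coend'' $\int^{X\in {\rm Coh}(G,\omega)} {}^{\vee}X\ot X$ realized inside the center. Concretely, since ${\rm Coh}(G,\omega)$ and ${\rm Coh}(G)$ agree as abelian categories with the same tensor product (only the associator changes), this coend agrees as an object with the one for the untwisted category ${\rm Coh}(G)$, whose adjoint/center story identifies $\mathcal{I}(\mathbf{1})$ with the regular algebra $\O(G)$. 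Forgetting the central structure, I would identify $\mathcal{I}(\mathbf{1})$ with $\O(G)$ viewed as an object of the Tannakian subcategory $\Rep(G)\subset\Z({\rm Coh}(G,\omega))$ supplied by Lemma~\ref{centergomega0}; here $\O(G)$ carries the left-translation $G$-action as in \ref{deequiv}.

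Next I would deduce (2) from (1) together with earlier results. The element $\mathcal{I}(\mathbf{1})$ is automatically an algebra in $\Z({\rm Coh}(G,\omega))$ because $\mathcal{I}$ is lax monoidal (being adjoint to the strong monoidal $\mathcal{F}$), and its commutativity follows from the fact that $\Rep(G)$ is a symmetric subcategory, so the algebra $\O(G)$ inherited from $\Rep(G)$ is commutative in the inherited braiding, exactly as recorded in \ref{deequiv}. For exactness I would invoke Theorem~\ref{exactalg}: $\O(G)$ is an exact algebra in $\Rep(G)$ (because $\Mod(\O(G))_{\Rep(G)}\cong\Vect$ is the standard module category, hence exact, as noted in \ref{deequiv}), and $\Rep(G)$ is a tensor subcategory of $\Z({\rm Coh}(G,\omega))$ by Lemma~\ref{centergomega0}; therefore $\O(G)$ is exact in $\Z({\rm Coh}(G,\omega))$.

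For (3) I would argue that $\mathcal{I}$ factors through $\Mod(\O(G))_{\Z({\rm Coh}(G,\omega))}$ and is an equivalence there. The image of $\mathcal{I}$ consists of $\mathcal{I}(\mathbf{1})$-modules in a canonical way, and the composite $\mathcal{I}:{\rm Coh}(G,\omega)\to\Mod(\O(G))_{\Z({\rm Coh}(G,\omega))}$ is fully faithful and monoidal by the projection-formula adjunction $\Hom(\mathcal{I}(X),\mathcal{I}(Y))\cong\Hom(X,\mathcal{F}\mathcal{I}(Y))$ combined with the computation $\mathcal{F}\mathcal{I}(Y)\cong Y\ot\O(G)$. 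To promote fully faithfulness to an equivalence I would compare Frobenius--Perron dimensions: by Lemma~\ref{dimmodca}, $\FPdim\big(\Mod(\O(G))_{\Z({\rm Coh}(G,\omega))}\big)=\FPdim(\Z({\rm Coh}(G,\omega)))/\FPdim(\O(G))$, and since $\Z({\rm Coh}(G,\omega))$ has Frobenius--Perron dimension $|G|^2$ while $\FPdim(\O(G))=|G|$, this equals $|G|=\FPdim({\rm Coh}(G,\omega))$, forcing the fully faithful monoidal functor to be an equivalence. Finally (4) is the identification $\mathcal{I}\circ\mathcal{F}\cong F$ of the composite with the free-module functor of \eqref{free module functor}; this is the projection formula $\mathcal{I}(\mathcal{F}(X))\cong X\ot\mathcal{I}(\mathbf{1})=X\ot\O(G)$, valid because $\mathcal{F}$ is a tensor functor and $\mathcal{I}$ is its adjoint, and I would check that the $\O(G)$-module structure produced by the adjunction matches the tautological one $\Id_X\ot{\rm m}_{\O(G)}$ defining $F$.

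The main obstacle I expect is part (3), specifically the careful identification $\mathcal{F}\mathcal{I}(Y)\cong Y\ot\O(G)$ as $\O(G)$-modules together with the verification that $\mathcal{I}$ lands in \emph{local} (dyslectic) modules with the correct tensor structure $\ot_{\O(G)}$; the dimension count then does the heavy lifting to conclude essential surjectivity, but making the monoidal structure on $\mathcal{I}$ match $\ot_{\O(G)}$ precisely, rather than merely up to isomorphism of underlying objects, requires tracking the tensor constraint $J$ of \eqref{J} through the adjunction.
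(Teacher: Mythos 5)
Your treatment of (1), (2) and (4) is essentially sound and (for (2)) identical to the paper's: exactness of $\O(G)$ in $\Rep(G)$ plus Lemma \ref{centergomega0} and Theorem \ref{exactalg}, and (4) is indeed the projection formula $\mathcal{I}(\mathcal{F}(X))\cong \mathcal{I}({\bf 1})\ot X$ for the (right) adjoint of the monoidal functor $\mathcal{F}$. The genuine gap is in (3): you apply the projection formula in the wrong order. What is true is $\mathcal{I}\circ\mathcal{F}(X)\cong X\ot\O(G)$ (that is precisely statement (4)); the identity your full-faithfulness argument rests on, $\mathcal{F}\circ\mathcal{I}(Y)\cong Y\ot\O(G)$, is false. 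For a concrete counterexample take $G$ a constant non-abelian group, $\omega=1$, and $Y=\delta_g$ the skyscraper at a non-central point $g\in G(k)$: then $\mathcal{I}(\delta_g)$ has underlying sheaf $\bigoplus_{h\in G}\delta_{hgh^{-1}}$, spread over the whole conjugacy class of $g$, whereas $\delta_g\ot\mathcal{F}(\O(G))\cong |G|\,\delta_g$ is concentrated at $g$ (recall $\O(G)=\mathcal{I}({\bf 1})$ lies in $\Rep(G)$, so its underlying sheaf is $|G|\,{\bf 1}$). Moreover, even with a correct formula, the adjunction you invoke computes $\Hom$ in $\Z({\rm Coh}(G,\omega))$, whereas full faithfulness of $\C\to\Mod(\O(G))_{\Z({\rm Coh}(G,\omega))}$ concerns the strictly smaller space of $\O(G)$-module maps; note that your own formula would yield $\Hom(X,Y)^{\oplus |G|}$ rather than $\Hom(X,Y)$, so the claim does not even balance numerically. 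So the injectivity leg of your ``injective tensor functor of equal FP dimension'' strategy collapses.

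There are two honest repairs, and they clarify the difference from the paper. The paper's route (this is what ``similar to \cite[Lemma 8.12.2]{EGNO}'' amounts to, with the machinery already prepared in \ref{exmodcat}) is via internal Homs: ${\rm Coh}(G,\omega)$ is an exact indecomposable module category over $\Z({\rm Coh}(G,\omega))$ through $\mathcal{F}$, the internal End of ${\bf 1}$ is exactly $\mathcal{I}({\bf 1})=\O(G)$, and Ostrik's theorem \cite[Theorem 7.10.1]{EGNO} gives the equivalence ${\rm Coh}(G,\omega)\cong\Mod(\O(G))_{\Z({\rm Coh}(G,\omega))}$, $Y\mapsto\mathcal{I}(Y)$, at the level of module categories, which commutativity of $\O(G)$ then upgrades to a tensor equivalence. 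Alternatively, you can keep your dimension count (which is correct: Lemma \ref{dimmodca} together with $\FPdim\,\Z(\C)=\FPdim(\C)^2$ gives both sides FP dimension $|G|$), but replace injectivity by \emph{surjectivity}: prove (4) first, observe that every $\O(G)$-module in the center is a quotient of a free module $\mathcal{I}(\mathcal{F}(W))$, hence $\mathcal{I}$ is surjective onto $\Mod(\O(G))_{\Z({\rm Coh}(G,\omega))}$, and a surjective tensor functor between finite tensor categories of equal FP dimension is an equivalence \cite[Proposition 6.3.4]{EGNO} --- the same step the paper uses in Proposition \ref{tilde F}. In either repair you still owe the verification you yourself flagged at the end, namely that the lax structure of $\mathcal{I}$ descends to isomorphisms $\mathcal{I}(X)\ot_{\O(G)}\mathcal{I}(Y)\cong\mathcal{I}(X\ot Y)$; that check, not essential surjectivity, is where exactness and commutativity of $\O(G)$ do their work.
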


\begin{proof}
(1) is clear, and since $\O(G)$ is an exact commutative algebra in $\Rep(G)$ (see \ref{deequiv}), (2) follows from Theorem \ref{exactalg}. The proof of (3), (4) is now similar to \cite[Lemma 8.12.2]{EGNO}.
\end{proof}

\subsection{The group $\mathcal{M}_{\rm ext}(\Rep(G))$}\label{mextrepg}
Fix a finite group scheme $G$ over $k$. Set $\E:=\Rep(G)$, and let ${\rm I}({\bf 1})\in \E\boxtimes\E$ be as in Lemma \ref{gdiag}. 

Consider the set of pairs $(\D,\iota)$, where $\D$ is a non-degenerate braided tensor category over $k$, and $\iota:\E\xrightarrow{1:1} \D$ is an injective braided tensor functor. We say that $(\D_1,\iota_1)$, $(\D_2,\iota_2)$ are equivalent, if there
exists a braided tensor equivalence $\psi:\D_1\xrightarrow{\cong}\D_2$ such that
$\psi\circ \iota_1= \iota_2$.
Let $\mathcal{M}_{\rm ext}(\E)$ denote the set of all equivalence classes of pairs $(\D,\iota)$.

Now for $(\D_1,\iota_1),(\D_2,\iota_2)$ in $\mathcal{M}_{\rm ext}(\E)$, let $B:=(\iota_1\boxtimes\iota_2)({\rm I}({\bf 1}))$. Then $B$ is an exact commutative algebra in $\D_1\boxtimes\D_2$. Consider the braided tensor category $\D:=\Mod^{0}(B)_{\D_1\boxtimes\D_2}$ (see \ref{deequiv}),
and let 
$\iota:\E\xrightarrow{1:1}\D$ be the injective functor 
$$\E=\Mod(I({\bf 1}))_{\E\boxtimes\E}\xrightarrow{\iota_1\boxtimes\iota_2}\Mod^{0}(B)_{\D_1\boxtimes\D_2}=\D.$$
Recall \cite{LKW} that the product rule 
$$(\D_1,\iota_1)\widetilde{\boxtimes}(\D_2,\iota_2)=(\D,\iota)$$
determines a commutative group structure on $\mathcal{M}_{\rm ext}(\E)$, 
with unit element $(\Z(\Coh(G)),\iota_0)$, where $\iota_0:\E\to \Z(\Coh(G))$ is the inclusion functor (see Lemma \ref{centergomega0}).

\section{The proof of Theorem \ref{mainnew}}\label{prrofmainnew}

Let $G$ be a finite $k$-group scheme, and let $\E:=\Rep(G)$ (see \ref{gsch}).

Let $\D$ be a finite {\em non-degenerate} braided tensor category over $k$, containing $\E$ as a Lagrangian Tannakian subcategory (see \ref{Centralizers and Lagrangian subcategories}). By (\ref{shimizu}), we have $\FPdim(\D)=|G|^2$ (see \ref{FTC}).

\subsection{The free module functor $F$} Let $A:=\O(G)$, and consider the $G$-de-equivariantization category $\C:=\Mod(A)_{\D}$  (see \ref{deequiv}). Since $A$ is exact in $\D$, $\C$ is a finite tensor category. By Lemma \ref{dimmodca}, we have 
\begin{equation}\label{thedimofc}
\FPdim(\C)=\FPdim(\D)/\FPdim(A)=|G|^2/|G|=|G|.
\end{equation}

Let $F:\D\to \C$ be the free $A$-module functor (\ref{free module functor}), and let $\E\xrightarrow{\iota}\D$ be the inclusion functor.

\begin{proposition}\label{restoE}
We have an exact sequence of tensor categories
\begin{equation}\label{exseqcenfun}
\E\xrightarrow{\iota}\D\xrightarrow{F}\C
\end{equation} 
with respect to the standard $\E$-module category $\Vect$ (see \ref{esftc}).
\end{proposition}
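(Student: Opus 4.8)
The plan is to present (\ref{exseqcenfun}) as an instance of the exact-sequence framework of \ref{esftc}, with $\A=\E$, $\B=\D$, and $\M=\Vect$ the standard $\E$-module category, and then to deduce exactness from the Frobenius--Perron dimension criterion of Theorem \ref{eqdefexseq}. Since $\M=\Vect$, we have $\Bnd(\M)=\Bnd(\Vect)=\Vect$, so the target $\C\boxtimes\Bnd(\M)$ is just $\C$, and the distinguished copy of $\Bnd(\M)$ inside it is the unit subcategory $\langle\be\rangle\cong\Vect$ of $\C$ (a genuine tensor subcategory, since $\be$ is simple and hence $\langle\be\rangle$ is closed under subquotients). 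Two things must then be verified to be in the situation of Theorem \ref{eqdefexseq}: that $F$ is surjective monoidal, which is recalled in \ref{Exact commutative algebras}; and that $\iota(\E)$ is exactly the full preimage $F^{-1}(\Vect):=\{X\in\D\mid F(X)\in\Vect\}$. Granting the latter, the dimension count $\FPdim(\E)\FPdim(\C)=|G|\cdot|G|=|G|^2=\FPdim(\D)$---using $\FPdim(\Rep(G))=|G|$, the identity (\ref{thedimofc}), and $\FPdim(\D)=|G|^2$---verifies condition (2) of Theorem \ref{eqdefexseq}, whence (\ref{exseqcenfun}) is exact with respect to $\Vect$.

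It remains to establish the two inclusions giving $\iota(\E)=F^{-1}(\Vect)$. For $\iota(\E)\subseteq F^{-1}(\Vect)$ I would note that the restriction of the free module functor to $\E$ is the de-equivariantization of $\E$ by $A=\O(G)$: for $X\in\E$ the object $F(X)=X\ot A$ lies in $\Mod(A)_\E\subseteq\Mod(A)_\D=\C$, and by \ref{deequiv} we have $\Mod(A)_\E\cong\Vect$, so its objects are multiples of $\be$, i.e. lie in $\Vect\subset\C$. Hence $F(\iota(X))\in\Vect$ for every $X\in\E$.

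The reverse inclusion $F^{-1}(\Vect)\subseteq\iota(\E)$ is the only delicate point, and the place where positive characteristic must be handled with care. Let $X\in\D$ satisfy $F(X)\cong\be^{\oplus n}$ in $\C$. Applying the forgetful functor $I$ of (\ref{adjoint to free module functor}) and using $I(\be)=A$ gives $X\ot A=I(F(X))\cong A^{\oplus n}$ in $\D$; since $A=\O(G)\in\E$ and $\E$ is closed under finite direct sums, this shows $X\ot A\in\E$. Now the unit $u_A:\be\to A$ is a morphism in $\E$ (the constant function $1$ is translation invariant) and is a monomorphism, so, tensoring being exact, $\id_X\ot u_A:X\cong X\ot\be\hookrightarrow X\ot A$ exhibits $X$ as a subobject of $X\ot A\in\E$. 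As $\E$ is a tensor subcategory of $\D$, it is closed under subobjects, and therefore $X\in\E$. I emphasize that one cannot instead split off $X$ as a direct summand: the augmentation $A\to\be$ is not $G$-equivariant once $\O(G)$ fails to be semisimple, so it is essential to use closure under subobjects rather than under retracts. Combining the two inclusions with the dimension identity and Theorem \ref{eqdefexseq} completes the proof.
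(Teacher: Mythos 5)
Your proof is correct and follows the same route as the paper's: place (\ref{exseqcenfun}) in the framework of \ref{esftc} with $\M=\Vect$, show that $F$ carries $\E$ into $\Vect\subset\C$, and conclude from the dimension count $\FPdim(\D)=|G|^2=\FPdim(\E)\FPdim(\C)$ together with Theorem \ref{eqdefexseq}. (The paper proves the first step via the untwisting isomorphism $\mathbf{V}\ot A\cong V\ot A$ in $\E$, which is the same fact you invoke in the form $\Mod(A)_{\E}\simeq\Vect$.) The one place where you go beyond the paper is your explicit verification of the reverse inclusion $F^{-1}(\Vect)\subseteq\iota(\E)$: the paper's two-line proof checks only that $F$ maps $\E$ into $\Vect$ and leaves the converse implicit, even though the setup of \ref{esftc} formally requires $\iota(\A)$ to consist of \emph{all} objects sent into $\Bnd(\M)$. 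Your argument for this step --- $X\cong X\ot\be\hookrightarrow X\ot A\cong A^{\oplus n}\in\E$ via $\id_X\ot u_A$, followed by closure of the tensor subcategory $\E$ under subobjects --- is correct, and your caution that one must use subobjects rather than direct summands is well placed in positive characteristic, since $\be$ is a direct summand of $\O(G)$ in $\Rep(G)$ only when $\Rep(G)$ is semisimple. So your write-up is, if anything, slightly more complete than the paper's own proof of this proposition.
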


\begin{proof}
For every $\mathbf{V}=(V,\rho)\in \E$, we have $\mathbf{V}\ot A\cong V\ot A$ in $\E$. Thus, $F(\mathbf{V})\cong V\ot A$ lies in $\Vect\subset\C$ (as $A$ is the unit object of $\C$), so $F$ maps $\E$ to $\Vect$. Since by (\ref{thedimofc}), $\FPdim(\D)=\FPdim(\E)\FPdim(\C)$, the claim follows from Theorem \ref{eqdefexseq}.
\end{proof}

Recall that $\C^*_{\C}\cong \C^{\rm op}$ as tensor categories. Thus, dualizing (\ref{exseqcenfun}) with respect to the exact indecomposable $\D$-module category $\C$ \cite[Theorem 4.1]{EG} (see (\ref{ES1})), we obtain an exact sequence
$$\C^{\rm op}\xrightarrow{F^*} \D^*_{\C}\xrightarrow{\iota^*} \E^*_{\Vect}\boxtimes \Bnd(\C)$$
of finite tensor categories with respect to the indecomposable exact $\C^{\rm op}$-module category $\C$. Since $\Bnd(\C)\cong \Bnd(\C)^{\rm op}$ as monoidal categories, and $(\E^*_{{\rm Vec}})^{\rm op}=\E^*_{{\rm Vec}}\cong\Coh(G)$ as tensor categories (see, e.g., \cite[Example 7.9.11]{EGNO}), we get an exact sequence
\begin{equation}\label{exseqdual}
\C\xrightarrow{F^*} \D^{*{\rm op}}_{\C}\xrightarrow{\iota^*} \Coh(G)\boxtimes \Bnd(\C)
\end{equation}
of finite tensor categories with respect to the indecomposable exact $\C$-module category $\C$. 
In particular, $\C$ can be identified with a tensor subcategory of $\D^{*{\rm op}}_{\C}$ via $F^*$. Also, since $\D^{*{\rm op}}_{\C}\cong \Bimod_{\D}(A)$ as tensor categories (see \ref{exmodcat}), $\D^{*{\rm op}}_{\C}$ contains $\Coh(G)=\E^*_{\Vect}=\Bimod_{\E}(A)$ as a tensor subcategory.

\begin{lemma}\label{exfac123}
We have an equivalence of abelian categories
$$\Psi:\D^{*{\rm op}}_{\C}\xrightarrow{\cong} \Coh(G)\boxtimes \C$$
such that 
$$\Psi(X\ot Y)=X\boxtimes Y,\,\,\,X\in \Coh(G),\, Y\in \C.$$
In particular, $P_{\D^*_{\C}}({\bf 1})=P_{\Coh(G)}({\bf 1})\ot P_{\C}({\bf 1})$. 
\end{lemma}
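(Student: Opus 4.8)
The plan is to read off the equivalence $\Psi$ directly from Theorem \ref{eqdefexseq}(3), applied to the dual exact sequence (\ref{exseqdual}), and then to unwind the resulting functor to obtain the explicit formula. In the notation of Theorem \ref{eqdefexseq}, the exact sequence (\ref{exseqdual}) has kernel $\C$ (embedded via $F^*$), total category $\D^{*{\rm op}}_{\C}$, quotient $\Coh(G)$, and is taken with respect to the module category $\M=\C$ (the regular left $\C$-module). Theorem \ref{eqdefexseq}(3) therefore produces an equivalence of abelian categories
$$T:\D^{*{\rm op}}_{\C}\boxtimes_{\C}\C\xrightarrow{\cong}\Coh(G)\boxtimes\C.$$
Since $\C$, viewed as a left module over itself, is the unit for the relative tensor product, and $\D^{*{\rm op}}_{\C}$ is a right $\C$-module via $F^*$, we have a canonical identification $\D^{*{\rm op}}_{\C}\boxtimes_{\C}\C\cong\D^{*{\rm op}}_{\C}$. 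Composing $T$ with this identification defines the desired equivalence $\Psi:\D^{*{\rm op}}_{\C}\xrightarrow{\cong}\Coh(G)\boxtimes\C$.

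To verify the formula $\Psi(X\ot Y)=X\boxtimes Y$ for $X\in\Coh(G)$ and $Y\in\C$ (both viewed inside $\D^{*{\rm op}}_{\C}$ via the two embeddings), I would trace $X\ot Y$ through the defining composition of $T$ in Theorem \ref{eqdefexseq}(3). Under $\D^{*{\rm op}}_{\C}\cong\D^{*{\rm op}}_{\C}\boxtimes_{\C}\C$ the object $X\ot Y$ becomes $(X\ot Y)\boxtimes_{\C}{\bf 1}$. Applying the tensor functor $\iota^*$, one uses that $\iota^*$ restricts to the section $\Coh(G)\subset\D^{*{\rm op}}_{\C}$ as the inclusion into the $\Coh(G)$-factor (so $\iota^*(X)=X\boxtimes\Id_{\C}$), while it sends $Y=F^*(Y)$ into $\End(\C)$ as the right-multiplication endofunctor $R_Y$ (so $\iota^*(Y)={\bf 1}\boxtimes R_Y$); hence $\iota^*(X\ot Y)=X\boxtimes R_Y$. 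The final action map $\Id_{\Coh(G)}\boxtimes\rho$, together with the evaluation identification $\End(\C)\boxtimes_{\C}\C\cong\C$ sending $R_Y\boxtimes_{\C}{\bf 1}\mapsto R_Y({\bf 1})=Y$, then yields $X\boxtimes Y$, as claimed.

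For the projective cover statement, since $\Psi$ is an equivalence of abelian categories with $\Psi({\bf 1})=\Psi({\bf 1}\ot{\bf 1})={\bf 1}\boxtimes{\bf 1}$, it carries the projective cover of ${\bf 1}$ in $\D^{*{\rm op}}_{\C}$ to the projective cover of ${\bf 1}\boxtimes{\bf 1}$ in $\Coh(G)\boxtimes\C$, which is $P_{\Coh(G)}({\bf 1})\boxtimes P_{\C}({\bf 1})$. Applying $\Psi^{-1}$ and invoking the formula with $X=P_{\Coh(G)}({\bf 1})$, $Y=P_{\C}({\bf 1})$ gives $P_{\D^{*{\rm op}}_{\C}}({\bf 1})=P_{\Coh(G)}({\bf 1})\ot P_{\C}({\bf 1})$; since $\D^*_{\C}$ and $\D^{*{\rm op}}_{\C}$ share the same underlying abelian category and unit object, the left-hand side equals $P_{\D^*_{\C}}({\bf 1})$. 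The genuine content, and the step I expect to be the main obstacle, is the explicit formula of the second paragraph: the existence of $\Psi$ is essentially immediate from Theorem \ref{eqdefexseq}(3), whereas identifying $\Psi(X\ot Y)$ requires careful bookkeeping of how $\iota^*$ sees the two embedded subcategories $\Coh(G)$ and $F^*(\C)$ and of the relative-tensor-product evaluation $\End(\C)\boxtimes_{\C}\C\cong\C$.
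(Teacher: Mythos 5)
Your proposal is correct and takes essentially the same approach as the paper: apply Theorem \ref{eqdefexseq}(3) to the dual exact sequence (\ref{exseqdual}), compose with the canonical identification $\D^{*{\rm op}}_{\C}\cong \D^{*{\rm op}}_{\C}\boxtimes_{\C}\C$, and deduce the projective-cover statement from the fact that the abelian equivalence $\Psi$ matches projective covers with those of the Deligne product $\Coh(G)\boxtimes\C$. The only difference is one of detail: the paper dismisses the identification $\Psi(X\ot Y)=X\boxtimes Y$ as "straightforward to verify," whereas you actually trace $X\ot Y$ through the defining composition of $T$ (via $\iota^*(X)=X\boxtimes \Id_{\C}$, $\iota^*(F^*(Y))={\bf 1}\boxtimes R_Y$, and the evaluation $R_Y({\bf 1})=Y$), which is exactly the bookkeeping the paper has in mind.
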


\begin{proof}
Applying Theorem \ref{eqdefexseq}(3) to the exact sequence (\ref{exseqdual}) provides the natural equivalence of abelian categories  
$$T:\D^{*{\rm op}}_{\C}\boxtimes_{\C}\C\cong \Coh(G)\boxtimes \C,$$
and it is straightforward to verify 
that composing the natural equivalence $\D^{*{\rm op}}_{\C}\xrightarrow{\cong}\D^{*{\rm op}}_{\C}\boxtimes_{\C}\C$ with $T$ yields an abelian equivalence $\Psi$
with the stated property, as claimed.

In particular, it follows from the above that for every simple objects $X\in \Coh(G)$ and $Y\in \C$, the object $X\ot Y$ is simple in $\D^{*{\rm op}}_{\C}$, and that every simple object in $\D^{*{\rm op}}_{\C}$ is of this form. Furthermore, since $P_{\Coh(G)}(X)\boxtimes P_{\C}(Y)$ is the projective cover of $X\boxtimes Y$ in $\Coh(G)\boxtimes \C$, and  
$$\Psi(P_{\Coh(G)}(X)\ot P_{\C}(Y))=P_{\Coh(G)}(X)\boxtimes P_{\C}(Y),$$ it follows that $P_{\Coh(G)}(X)\ot P_{\C}(Y)$ is the projective cover of $X\ot Y$ in $\D^{*{\rm op}}_{\C}$. Thus, we get the last statement for $X=Y={\bf 1}$.
\end{proof}

\subsection{The central structure on $F$} For every $X\in \D$, $(Y,{\rm m}_Y)\in\C$, consider the morphism
\begin{equation}\label{centralstructure}
\Phi_{X,Y}:F(X) \ot_A Y \cong Y \ot_A F(X),
\end{equation}
given by the composition
\begin{eqnarray*}\label{centralstructure1}
\lefteqn{F(X) \ot_A Y=(X\ot A)\ot_A Y
\xrightarrow{\Id_X\ot ({\rm m}_Y\circ {\rm c}_{Y,A}^{-1})}X\otimes Y}\\ & & \xrightarrow{{\rm c}_{X,Y}}Y\otimes X\xrightarrow{\Id_Y\ot u_A\ot \Id_X} Y\ot A\ot X
\xrightarrow{\Id_{Y}\ot c_{A,X}}Y\ot (X\otimes A)\\& & \twoheadrightarrow Y \ot_A F(X).
\end{eqnarray*}

Using that $A$ is commutative, the following 
lemma can be verified in a straightforward manner.

\begin{lemma}\label{isomphixy}
The following hold:
\begin{enumerate}
\item
$\{\Phi_{X,Y}\mid\,X\in \D,\, Y\in\C\}$ is a natural family of $\C$-isomorphisms.
\item
For every $X,Z\in \D$ and $Y\in\C$, we have
$$\Phi_{X\ot Z,Y}=(\Id_Y\ot J^{-1}_{X,Z})\circ (\Phi_{X,Y}\ot\Id_{F(Z)})\circ (\Id_{F(X)}\ot\Phi_{Z,Y})\circ (J_{X,Z}\ot \Id_Y)$$
(see (\ref{J})).
\item
For every $X\in \D$ and $Y,Z\in\C$, we have
$$\Phi_{X,Y\ot_A Z}=(\Id_{Y}\ot\Phi_{X,Z})\circ(\Phi_{X,Y}\ot \Id_Z).\,\,\,\,\,\,\,\,\,\,\,\,\,\,\,\,\,\,
\,\,\,\,\,\,\,\,\,\,\,\,\,\,\,\,\,\,\,\,\,\,\,\,\,\,\,\,\,\,\,\,\,\,\,\,\,\,\,\,\,\,\,\,\,\,\,\,\qed$$
\end{enumerate}
\end{lemma}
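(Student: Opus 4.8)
The plan is to prove Lemma \ref{isomphixy} by direct verification, using the commutativity of $A$ together with the hexagon/naturality axioms for the braiding ${\rm c}$ on $\D$ and the tensor structure $J$ on $F$. Since the statement is precisely that $\Phi$ endows the free module functor $F$ with the structure of a central (braided) functor lifting $\D\to\C$, the three parts correspond respectively to naturality, compatibility of $\Phi$ with the tensor structure in the $\D$-variable, and compatibility in the $\C$-variable. I would carry these out in the given order, since the later parts reuse the string-diagram bookkeeping established in the first.

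First I would address part (1). Each building block in the composition defining $\Phi_{X,Y}$ is natural in both $X$ and $Y$: the module-action map ${\rm m}_Y$ is natural in $Y$, the braidings ${\rm c}_{X,Y}$, ${\rm c}_{Y,A}^{-1}$, ${\rm c}_{A,X}$ are natural by the braiding axiom, and the unit insertion $u_A$ and the quotient maps are natural by construction. Chaining naturality through the composite shows $\{\Phi_{X,Y}\}$ is a natural family. That $\Phi_{X,Y}$ is an isomorphism of $A$-modules in $\C$ I would check by exhibiting the inverse: reversing the composition and replacing ${\rm c}$ by ${\rm c}^{-1}$ gives a two-sided inverse, where one uses that $A$ is commutative to verify that the map is $A$-linear on both sides (i.e.\ respects $\ot_A$ over $A$), and that the quotient $X\ot Y\ot A\twoheadrightarrow Y\ot_A F(X)$ is well defined modulo the $A$-relations precisely because of commutativity of ${\rm m}_A$ with ${\rm c}_{A,A}$.

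For part (2), I would expand both sides as composites of braidings and unit insertions and reduce them to a common normal form. The left side $\Phi_{X\ot Z,Y}$ involves a single braiding ${\rm c}_{X\ot Z,Y}$, whereas the right side factors this through ${\rm c}_{X,Y}$ and ${\rm c}_{Z,Y}$; the hexagon axiom for the braiding is exactly what converts ${\rm c}_{X\ot Z,Y}$ into the appropriate composite of ${\rm c}_{X,Y}\ot\id$ and $\id\ot{\rm c}_{Z,Y}$. The tensor-structure morphisms $J_{X,Z}$ and $J^{-1}_{X,Z}$, which by (\ref{J}) are built from a single $u_A$ insertion, account for the bookkeeping of the two copies of $A$ appearing in $F(X)\ot_A F(Z)$ versus $F(X\ot Z)$; here I would invoke commutativity of $A$ to slide the unit insertions past the braidings. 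Part (3) is analogous but simpler, using instead the other hexagon to expand ${\rm c}_{X,Y\ot_A Z}$ and the associativity of $\ot_A$, with no $J$-terms needed since the $\D$-variable $X$ is not being tensored.

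The main obstacle I expect is the careful handling of the relative tensor product $\ot_A$: every map must be checked to descend through the coequalizer defining $\ot_A$, and this descent relies at each step on the commutativity of $A$ (so that the two $A$-module structures $X_+$ and $X_-$ on a local module agree appropriately, and so that unit insertions and braidings are compatible with the $A$-action relations). Rather than grinding through the coequalizer diagrams, I would organize the computation using string diagrams in the graphical calculus for braided categories with the algebra $A$ drawn as a special Frobenius-type object, reducing each identity to an isotopy plus an application of commutativity; this keeps the $A$-linearity and $\ot_A$-descent verifications routine rather than delicate. This is indeed the kind of ``straightforward'' check the statement advertises, with commutativity of $A$ as the single essential input.
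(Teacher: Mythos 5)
Your plan is correct and matches the paper's approach exactly: the paper offers no written proof at all, stating only that the lemma ``can be verified in a straightforward manner'' using the commutativity of $A$, which is precisely the verification (naturality of the constituent maps, the hexagon axioms for the braiding in parts (2) and (3), and descent through the $\ot_A$ coequalizers via commutativity of $A$) that you outline. One minor caution: objects of $\C$ here are arbitrary $A$-modules, not local ones, so your aside about the structures $X_+$ and $X_-$ agreeing is neither available nor needed --- what the descent arguments require is only the consistent use of the chosen left $A$-structure together with commutativity of $A$.
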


In particular, Lemma \ref{isomphixy} implies the following corollary.

\begin{corollary}\label{fextendstobr}
The free $A$-module functor $F$ (\ref{free module functor}) extends to a {\em braided} tensor functor 
\begin{equation}\label{free module functor extension} 
{\rm F}: \D \to \Z(\C)
\end{equation}
such that $F$ coincides with the composition of
${\rm F}$ and the forgetful tensor functor $\Z(\C) \twoheadrightarrow \C$. \qed
\end{corollary}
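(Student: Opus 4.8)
The plan is to read Lemma \ref{isomphixy} as the statement that, for each fixed $X\in\D$, the family $\{\Phi_{X,Y}\}_{Y\in\C}$ is a half-braiding on the object $F(X)\in\C$, and thereby promote $F$ to a functor into the Drinfeld center $\Z(\C)$. Recall that an object of $\Z(\C)$ is a pair $(Y,\gamma)$ with $Y\in\C$ and $\gamma=\{\gamma_Z:Y\ot_A Z\xrightarrow{\cong}Z\ot_A Y\}_{Z\in\C}$ a natural family satisfying $\gamma_{Z\ot_A W}=(\Id_Z\ot_A\gamma_W)\circ(\gamma_Z\ot_A\Id_W)$; that a morphism $(Y,\gamma)\to(Y',\gamma')$ is a $\C$-morphism $f$ with $(\Id_Z\ot_A f)\circ\gamma_Z=\gamma'_Z\circ(f\ot_A\Id_Z)$; and that the braiding of $\Z(\C)$ sends $(Y,\gamma)\ot_A(Y',\gamma')$ to $(Y',\gamma')\ot_A(Y,\gamma)$ via $\gamma_{Y'}$.

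First I would define ${\rm F}$ on objects by ${\rm F}(X):=(F(X),\{\Phi_{X,Y}\}_{Y\in\C})$. By Lemma \ref{isomphixy}(1) the $\Phi_{X,Y}$ are natural $\C$-isomorphisms, and by Lemma \ref{isomphixy}(3) they obey the half-braiding compatibility; the normalization $\Phi_{X,A}=\Id_{F(X)}$ (with $A$ the unit of $\C$) is a direct check from the definition (\ref{centralstructure1}) using commutativity of $A$. Hence ${\rm F}(X)\in\Z(\C)$. On morphisms I would set ${\rm F}(f):=F(f)$; that $F(f)$ intertwines the half-braidings of ${\rm F}(X)$ and ${\rm F}(X')$ is exactly naturality of $\Phi$ in its first argument, again part of Lemma \ref{isomphixy}(1). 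By construction the underlying $\C$-object of ${\rm F}(X)$ is $F(X)$, so composing ${\rm F}$ with the forgetful functor $\Z(\C)\twoheadrightarrow\C$ recovers $F$, which gives the last assertion of the corollary.

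Next I would endow ${\rm F}$ with the same tensor structure isomorphisms $J_{X,Z}:F(X\ot Z)\xrightarrow{\cong}F(X)\ot_A F(Z)$ as for $F$ (see (\ref{J})). The one extra point is that each $J_{X,Z}$ is a morphism in $\Z(\C)$, i.e.\ it intertwines the half-braiding $\Phi_{X\ot Z,-}$ with the tensor-product half-braiding $(\Phi_{X,-}\ot\Id_{F(Z)})\circ(\Id_{F(X)}\ot\Phi_{Z,-})$ on ${\rm F}(X)\ot_A{\rm F}(Z)$; after rearranging, this is precisely Lemma \ref{isomphixy}(2). The hexagon and unit axioms for $J$ may be checked after applying the faithful tensor functor $\Z(\C)\twoheadrightarrow\C$, where they reduce to the already-known tensor-functoriality of $F$.

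Finally, to see that ${\rm F}$ is \emph{braided} I would verify that it sends the braiding ${\rm c}_{X,Z}$ of $\D$ to the braiding of $\Z(\C)$. Since the forgetful functor $\Z(\C)\twoheadrightarrow\C$ is faithful, this reduces to the single identity
$$\Phi_{X,F(Z)}\circ J_{X,Z}=J_{Z,X}\circ F({\rm c}_{X,Z})$$
in $\C$, the left-hand side being the $\Z(\C)$-braiding on ${\rm F}(X),{\rm F}(Z)$ transported through the tensor structure and the right-hand side being ${\rm F}({\rm c}_{X,Z})$ transported likewise. I expect this identity to be the main obstacle: it is a diagram chase that unfolds the definition (\ref{centralstructure1}) of $\Phi_{X,F(Z)}$ in the case $Y=F(Z)=Z\ot A$ and then matches it against ${\rm c}_{X,Z}$ using the hexagon axioms and naturality of the braiding of $\D$ together with commutativity of $A$. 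All the ingredients are in place, so this is routine in principle, but it is the one genuinely computational step, being the place where the interplay of the braiding, the module structures, and commutativity of $A$ must be tracked carefully.
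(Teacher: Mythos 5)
Your proposal is correct and follows essentially the same route as the paper, which simply declares the corollary an immediate consequence of Lemma \ref{isomphixy}: parts (1) and (3) give the half-braiding on $F(X)$, part (2) makes $J$ a morphism in $\Z(\C)$, and the braided property is the one point requiring the explicit formula (\ref{centralstructure1}), exactly as you identify. You have merely written out the verification that the paper leaves implicit.
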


Recall that $\C$ is an indecomposable exact module category over $\Z(\C)$
via the forgetful functor $\Z(\C) \twoheadrightarrow \C$, and $\Z(\C)^*_{{\C}}\cong \C\boxtimes \C^{{\rm op}}$ as tensor categories (see, e.g., \cite[Theorem 7.16.1]{EGNO}).

Recall that since $\D$ is non-degenerate, $\D\boxtimes \D^{{\rm op}}\cong \Z(\D)$ as braided tensor categories. Recall also, that $\Z(\D)\cong \Z(\D^*_{\C})$ as braided tensor categories \cite[Corollary 7.16.2]{EGNO}.

\begin{proposition} \cite[Proposition 4.2]{DGNO}
\label{tilde F}
The functor ${\rm F}$ (\ref{free module functor extension}) is an equivalence of braided tensor categories.
\end{proposition}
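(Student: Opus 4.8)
The plan is to show that ${\rm F}$ is fully faithful and that $\FPdim(\D)=\FPdim(\Z(\C))$. Granting this, let $\langle{\rm F}(\D)\rangle\subseteq\Z(\C)$ be the tensor subcategory generated by the image of ${\rm F}$; since ${\rm F}$ is fully faithful and, by construction, dominant onto $\langle{\rm F}(\D)\rangle$, it is an equivalence $\D\cong\langle{\rm F}(\D)\rangle$, so $\FPdim\langle{\rm F}(\D)\rangle=\FPdim(\D)=\FPdim(\Z(\C))$ and hence $\langle{\rm F}(\D)\rangle=\Z(\C)$. Thus ${\rm F}$ is an equivalence, and being braided by Corollary \ref{fextendstobr}, a braided tensor equivalence. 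The dimension count is immediate: since $\Z(\C)^*_{\C}\cong\C\boxtimes\C^{\op}$ and $\FPdim$ is a Morita invariant, $\FPdim(\Z(\C))=\FPdim(\C)^2=|G|^2=\FPdim(\D)$ by (\ref{thedimofc}).

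For full faithfulness I would use the right adjoint. As a tensor functor between finite tensor categories, ${\rm F}$ is exact and faithful and admits a right adjoint $R\colon \Z(\C)\to\D$ satisfying the projection formula $R({\rm F}(Z)\ot W)\cong Z\ot R(W)$. Taking $W={\bf 1}_{\Z(\C)}$ gives $R\circ{\rm F}(Z)\cong Z\ot R({\bf 1}_{\Z(\C)})$, so the unit $Z\to R{\rm F}(Z)$ is a natural isomorphism --- equivalently, ${\rm F}$ is fully faithful --- if and only if $R({\bf 1}_{\Z(\C)})\cong{\bf 1}_{\D}$. Thus everything reduces to computing the algebra $R({\bf 1}_{\Z(\C)})\in\D$.

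This is the crux. Writing $U\colon \Z(\C)\twoheadrightarrow\C$ for the forgetful functor and recalling $U\circ{\rm F}=F$ together with the adjunction $F\dashv I$ of (\ref{free module functor})--(\ref{adjoint to free module functor}), the object $R({\bf 1}_{\Z(\C)})$ is naturally a subobject of $I(U({\bf 1}_{\Z(\C)}))=I(A)=A=\O(G)\in\D$: a morphism $Z\to R({\bf 1}_{\Z(\C)})$ is a morphism ${\rm F}(Z)\to{\bf 1}_{\Z(\C)}$ in the center, i.e. an $A$-module map into $A$ that is compatible with the half-braidings $\Phi_{-,-}$ of (\ref{centralstructure}). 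Since $\Phi$ is built from the braiding ${\rm c}$ of $\D$, this compatibility is exactly the condition that identifies $R({\bf 1}_{\Z(\C)})$ with the unit object of the braided category $\Mod^{0}(A)_{\D}$ of local (dyslectic) $A$-modules. The main work --- and the place where the hypotheses are genuinely used --- is to make this identification precise by unwinding the central structure of Corollary \ref{fextendstobr}.

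Finally, the Lagrangian hypothesis closes the argument. Because $\E'=\E$ with $\D$ non-degenerate, the braided category of local modules is trivial:
$$\FPdim\big(\Mod^{0}(A)_{\D}\big)=\FPdim(\D)/\FPdim(A)^2=|G|^2/|G|^2=1,$$
so $\Mod^{0}(A)_{\D}=\Vect$ and its unit object is ${\bf 1}_{\D}$. Hence $R({\bf 1}_{\Z(\C)})\cong{\bf 1}_{\D}$, ${\rm F}$ is fully faithful, and combined with the dimension equality above it is a braided tensor equivalence. Throughout, the dual exact sequence (\ref{exseqdual}) and the factorization of projective covers in Lemma \ref{exfac123} (which give $\FPdim(\D^{*}_{\C})=\FPdim(\Coh(G))\FPdim(\C)=|G|^2$) serve as the dimension bookkeeping that keeps these identifications consistent.
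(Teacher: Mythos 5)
Your skeleton --- full faithfulness plus the count $\FPdim(\Z(\C))=\FPdim(\C)^2=|G|^2=\FPdim(\D)$ --- is sound, and the reduction of full faithfulness to $R({\bf 1}_{\Z(\C)})\cong{\bf 1}_{\D}$ via the projection formula is legitimate. The genuine gap is exactly at the point you flag as ``the main work'': the identification of $R({\bf 1}_{\Z(\C)})$ with ``the unit object of $\Mod^{0}(A)_{\D}$'' is not a correct statement, and no argument is offered for it. The unit object of $\Mod^{0}(A)_{\D}$ is $A=\O(G)$ itself, a $|G|$-dimensional object of $\D$; this does not change when $\Mod^{0}(A)_{\D}\simeq\Vect$ (under such an equivalence $A$ corresponds to $k$, but its underlying object in $\D$ is still $\O(G)$). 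So the closing chain ``$\Mod^{0}(A)_{\D}=\Vect$, its unit object is ${\bf 1}_{\D}$, hence $R({\bf 1}_{\Z(\C)})\cong{\bf 1}_{\D}$'' does not parse. The conceptual point it misses: triviality of local modules is \emph{not} what makes ${\rm F}$ fully faithful (take $A={\bf 1}$: then ${\rm F}:\D\to\Z(\D)$ is fully faithful while $\Mod^{0}({\bf 1})_{\D}=\D$); it is what would make ${\rm F}$ dominant --- and that you already get from $\FPdim(\Z(\C))=\FPdim(\D)$. In addition, the formula $\FPdim\big(\Mod^{0}(A)_{\D}\big)=\FPdim(\D)/\FPdim(A)^2$ is itself a substantive theorem: it is known for \'etale algebras in non-degenerate braided \emph{fusion} categories \cite{DMNO}, but in the finite non-semisimple characteristic-$p$ setting of this paper it is not an off-the-shelf fact and cannot be invoked as bookkeeping.

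The good news is that your reduction can be completed directly, without local modules at all. A map $Z\to R({\bf 1}_{\Z(\C)})$ is an $A$-module map $F(Z)\to A$, equivalently a map $\tilde f:Z\to A$ in $\D$, such that the corresponding module map intertwines the half-braiding (\ref{centralstructure}) on $F(Z)$ with the trivial one on ${\bf 1}_{\C}=A$. Testing this compatibility only against the free modules $F(X)$, $X\in\D$, and precomposing with $u_A$, it unwinds to ${\rm c}_{X,A}\circ {\rm c}_{A,X}\circ(\tilde f\ot\Id_X)=\tilde f\ot\Id_X$ for all $X\in\D$, i.e.\ the image of $\tilde f$ centralizes every object of $\D$. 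Non-degeneracy ($\D'=\Vect$) and $\Hom_{\D}({\bf 1},A)=\Hom_{\E}({\bf 1},\O(G))=k$ then force $R({\bf 1}_{\Z(\C)})\cong{\bf 1}_{\D}$; note this uses only non-degeneracy and connectedness of $A$, with the Lagrangian hypothesis entering solely through the dimension count. Done this way, your argument is a genuine alternative to the paper's, which instead dualizes: it shows the dual functor ${\rm F}^*:\C\boxtimes\C^{\op}\to\D^*_{\C}$ is surjective by identifying ${\rm F}^*\circ(F\boxtimes F)$ with the forgetful functor $\D\boxtimes\D^{\op}\cong\Z(\D)\cong\Z(\D^*_{\C})\twoheadrightarrow\D^*_{\C}$, concludes that ${\rm F}$ is injective by \cite[Theorem 7.17.4]{EGNO}, and finishes with the same Frobenius--Perron dimension comparison.
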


\begin{proof} Consider $\C$ as a module category over $\D$ and $\Z(\C)$
via $F$ and the forgetful functor $\Z(\C) \twoheadrightarrow \C$, respectively. In both cases, $\C$ is indecomposable and exact. Let 
\begin{equation}\label{functorT}
{\rm F}^*:\C\boxtimes \C^{{\rm op}}\to \D^*_{\C}
\end{equation}
be the dual functor to
${\rm F}$. Recall that $\D^*_{\C}\cong {\rm Bimod}_{\D}(A)^{{\rm op}}$ as tensor categories (see \ref{exmodcat}). Then it is straightforward to verify that we have 
\begin{equation}\label{explicitfstar}
{\rm F}^*(X\boxtimes Y)= X_+\otimes_AY_-,\,\,\,X,Y\in \C
\end{equation}
(see \ref{Exact commutative algebras}). 

In particular, we see
that the functor 
$$\D \boxtimes \D^{{\rm op}}\stackrel{F\boxtimes
F}{\longrightarrow} \C\boxtimes
\C^{{\rm op}}\stackrel{{\rm F}^*}{\longrightarrow}\D^*_{\C}={\rm Bimod}_{\D}(A)^{{\rm op}}$$ coincides with
the functor 
$$\D\boxtimes \D^{{\rm op}}\cong \Z(\D)\cong
\Z(\D^*_{\C})\twoheadrightarrow \D^*_{\C}.$$ Since the forgetful functor
$\Z(\D^*_{\C})\twoheadrightarrow \D^*_{\C}$ is surjective, we see that the functor ${\rm F}^*$ is surjective. Thus, 
${\rm F}$ is injective \cite[Theorem 7.17.4]{EGNO}. 

Finally, since ${\rm F}$ is an injective tensor functor between categories
of equal Frobenius-Perron dimension (see (\ref{thedimofc})), it is necessarily an equivalence \cite[Proposition 6.3.4]{EGNO}, as claimed.
\end{proof}

Following Proposition \ref{tilde F}, we now aim to prove that there exists $\omega\in H^3(G,\mathbb{G}_m)$ such that $\C\cong {\rm Coh}(G,\omega)$ as tensor categories.

\subsection{$\mathcal{O}(\C)=G(k)$} 
In this section we show that $\C$ is pointed, and its group of invertible objects is isomorphic to $G(k)$.

Recall that ${\rm Bimod}_{\E}(A)=\E^*_{\Vect}\cong {\rm Coh}(G)$ as tensor categories, and $\mathcal{O}({\rm Coh}(G))\cong G(k)$. Let $A_g$, $g\in G(k)$, with $A_e=A={\bf 1}$, denote the invertible objects of ${\rm Coh}(G)={\rm Bimod}_{\E}(A)\subset{\rm Bimod}_{\D}(A)$. Namely, $A_g=A$ as  right $A$-modules, and the left $A$-action is determined by $g\in G(k)$. 

Let $G(\C)$ denote the group of invertible objects of $\C$.

\begin{proposition}\label{grpisomo}
The following hold:
\begin{enumerate}
\item
The category $\C$ is pointed (i.e., $\mathcal{O}(\C)=G(\C)$).
\item
For every $X\in G(\C)$, we have
$$
\FPdim_{\C}(P_{\C}(X))=\FPdim_{\Coh(G)}(P_{\Coh(G)}({\bf 1}))
.$$
(See \ref{FTC}.)
\item
The tensor equivalence ${\rm F}^*$ (\ref{functorT}) induces a group isomorphism 
$$G(k)\xrightarrow{\cong} G(\C),\,\,\,g\mapsto X_g.$$
\item
The universal faithful group grading of $\C$ is given by 
$$\C=\bigoplus_{g\in G(k)}\C_g,$$ 
where $\C_g\subset \C$ denotes the smallest Serre subcategory of $\C$ containing the invertible object $X_g$ (see \cite[Section 4.14]{EGNO}). 
\end{enumerate}
\end{proposition}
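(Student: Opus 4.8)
```latex
\textbf{Proof proposal.}
The plan is to extract all four statements from the abelian equivalence
$\Psi$ of Lemma \ref{exfac123} together with the dual functor
${\rm F}^*$ of (\ref{functorT}), whose explicit formula
(\ref{explicitfstar}) is the key computational input. First I would establish
(1) and (2) simultaneously from the decomposition
$P_{\D^*_{\C}}({\bf 1})=P_{\Coh(G)}({\bf 1})\ot P_{\C}({\bf 1})$ furnished by
Lemma \ref{exfac123}. Indeed, since ${\rm F}^*$ is a tensor equivalence and
$\Z(\C)^*_{\C}\cong \C\boxtimes \C^{\rm op}$, the projective cover of the unit in
$\D^*_{\C}\cong \Bimod_\D(A)^{\rm op}$ can be computed two ways: via $\Psi$ as
the above product, and via ${\rm F}^*$ using (\ref{explicitfstar}) applied to
$P_\C({\bf 1})_+\ot_A P_\C({\bf 1})_-$. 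Comparing Frobenius--Perron dimensions of
these two expressions (and recalling
$\FPdim_{\Coh(G)}(P_{\Coh(G)}({\bf 1}))$ equals the FP-dimension of the regular
bimodule over $A$) will force every simple $X\in \mathcal O(\C)$ to satisfy
$\FPdim_\C(P_\C(X))=\FPdim_{\Coh(G)}(P_{\Coh(G)}({\bf 1}))$, a constant. Since
the sum $\sum_{X\in\mathcal O(\C)}\FPdim(X)\FPdim(P_\C(X))=\FPdim(\C)=|G|$ while
the analogous identity for $\Coh(G)$ reads
$\sum_{g\in G(k)}\FPdim(P_{\Coh(G)}({\bf 1}))=|G|$ (as $\Coh(G)$ is pointed with
$|G(k)|$ simples each of FP-dimension $1$), a dimension count forces
$\FPdim(X)=1$ for every simple $X$, giving (1), and simultaneously pins down the
common projective-cover dimension, giving (2).

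For (3), I would use the restriction of ${\rm F}^*$ to the invertible objects.
Recall $A_g$, $g\in G(k)$, are the invertibles of
$\Coh(G)=\Bimod_\E(A)\subset \Bimod_\D(A)=\D^{*\rm op}_\C$, with $A_e={\bf 1}$.
The idea is that each $A_g$, viewed in $\D^*_\C$, must be the image under
${\rm F}^*$ of an invertible object $X_g\boxtimes {\bf 1}$ (or
${\bf 1}\boxtimes X_g$) of $\C\boxtimes \C^{\rm op}$, because ${\rm F}^*$ is a
tensor equivalence and hence matches invertibles with invertibles. Unwinding
(\ref{explicitfstar}), the assignment $X_g\mapsto (X_g)_+\ot_A {\bf 1}_-$ should
recover $A_g$ up to the side conventions, which identifies $g\mapsto X_g$ as a
bijection $G(k)\to G(\C)$; that it is a group homomorphism follows from the
monoidality of ${\rm F}^*$ together with $A_g\ot_A A_h\cong A_{gh}$ in
$\Coh(G)$. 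The main obstacle I anticipate is bookkeeping the $\pm$ bimodule
structures and the opposite tensor products: one must check that the two
embeddings $\C\hookrightarrow \D^*_\C$ (via $F^*$) and
$\Coh(G)\hookrightarrow \D^{*\rm op}_\C$ interact correctly under
${\rm F}^*$, so that the invertibles of $\Coh(G)$ land in the $\C$-factor rather
than being entangled with $\C^{\rm op}$. Commutativity of $A$, which underlies
Lemma \ref{isomphixy}, is exactly what reconciles $X_+$ and $X_-$ here and
should make the two structures agree.

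Finally, for (4) I would invoke the uniqueness and universality of the faithful
grading. Having produced in (3) a subgroup $\{X_g\}_{g\in G(k)}\cong G(k)$ of
invertibles, and having shown in (1) that $\C$ is pointed with exactly
$|G(k)|$ simples, the full subcategories $\C_g$ generated by $X_g$ exhaust
$\mathcal O(\C)$ and visibly satisfy
$\C_g\ot_A \C_h\subset \C_{gh}$, so $\C=\bigoplus_{g\in G(k)}\C_g$ is a faithful
$G(k)$-grading. To see it is the \emph{universal} one, I would appeal to
\cite[Section 4.14]{EGNO}: the universal grading group is the group of invertible
objects of the trivial component of $\C$ under the adjoint subcategory, but for a
pointed category whose invertibles form a single group $G(\C)\cong G(k)$ with no
further fusion, the universal grading coincides with the grading by $G(\C)$
itself. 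Thus the grading built from the $\C_g$ is universal, completing (4). I
expect (3) to be the crux; steps (1)--(2) are essentially dimension arithmetic
once Lemma \ref{exfac123} is in hand, and (4) is then formal.
```
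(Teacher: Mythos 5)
There is a genuine gap, and it sits exactly at the crux of the proposition: part (1). Comparing the two computations of $P_{\D^*_{\C}}({\bf 1})$ only yields information about the projective cover of the \emph{unit}: via $\Psi$ you get $\FPdim(P_{\D^*_{\C}}({\bf 1}))=\FPdim(P_{\Coh(G)}({\bf 1}))\,\FPdim(P_{\C}({\bf 1}))$, and via ${\rm F}^*$ you get $\FPdim(P_{\D^*_{\C}}({\bf 1}))=\FPdim(P_{\C}({\bf 1}))^2$; together these give $\FPdim(P_{\C}({\bf 1}))=\FPdim(P_{\Coh(G)}({\bf 1}))$ and say nothing about $P_{\C}(X)$ for other simples $X$. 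Your claim that the comparison ``forces every simple $X$ to satisfy $\FPdim_{\C}(P_{\C}(X))=\FPdim_{\Coh(G)}(P_{\Coh(G)}({\bf 1}))$'' is unsupported: that identity is automatic for \emph{invertible} $X$ (since $P_{\C}(X)\cong X\ot P_{\C}({\bf 1})$), but using it presupposes (1), which is what you are trying to prove. Moreover, even granting it, your count only gives $\sum_{X\in\mathcal{O}(\C)}\FPdim(X)=|G(k)|$, which does not force each $\FPdim(X)=1$ unless you already know the number of simples is $|G(k)|$ (numerically, e.g.\ two simples of FP-dimension $2$ with $|G(k)|=4$ is not excluded). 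The paper proves (1) by a mechanism your proposal never invokes: for simple $X\in\C$, the object ${\rm F}^*(X\boxtimes X^*)=X_+\ot_A(X^*)_-$ is simple because ${\rm F}^*$ is an equivalence, and the adjunction computation $\Hom_{\D^*_{\C}}(X_+\ot_A(X^*)_-,A)\cong\End_{\D^*_{\C}}(X_+)=k$ then produces a nonzero map between simple objects ($A$ being the simple unit of $\D^*_{\C}$), hence $X\ot_A X^*\cong{\bf 1}_{\C}$ and $X$ is invertible. The logical order in the paper is (1) $\Rightarrow$ (2) $\Rightarrow$ (3); your proposal inverts it and the inversion cannot be repaired by dimension arithmetic alone.

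Part (3) also contains an error: the preimage of $A_g$ under ${\rm F}^*$ is \emph{not} of the form $X_g\boxtimes{\bf 1}$ or ${\bf 1}\boxtimes X_g$. Indeed ${\rm F}^*(X\boxtimes{\bf 1})=X_+$, and a bimodule isomorphism $X_+\cong A_g$ would force $X\cong A$ as right $A$-modules, whence (using commutativity of $A$) $X_+\cong A_e$, so $g=e$; the same applies to ${\bf 1}\boxtimes X$. The correct statement, derived in the paper, is ${\rm F}^*(X_g\boxtimes Y_g)=A_g$ with $Y_g=X_g^*$: forgetting the left module structure in $(X_g)_+\ot_A(Y_g)_-\cong A_g$ gives $X_g\ot_A Y_g\cong{\bf 1}_{\C}$. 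Finally, you assert bijectivity of $g\mapsto X_g$ without argument; injectivity is easy, but surjectivity is precisely where (1), (2) and $\FPdim(\C)=|G|$ must be combined, via the chain $|G|=\sum_{X\in G(\C)}\FPdim(P_{\C}(X))\ge\sum_{g\in G(k)}\FPdim(P_{\Coh(G)}({\bf 1}))=\FPdim(\Coh(G))=|G|$, which forces equality and hence that every invertible of $\C$ is some $X_g$. Your part (4) does match the paper's argument and is fine once (1)--(3) are actually established.
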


\begin{proof}
(1) Let $X$ be simple in $\C$. By \cite[Proposition 6.3.1]{EGNO}, $X_+$ is simple in $\D^*_{\C}$ (see \ref{Exact commutative algebras}).
Since $X^*$ is simple in $\C$, and ${\rm F}^*$ (\ref{functorT}) is an equivalence, it follows from (\ref{explicitfstar}) that 
${\rm F}^*(X\boxtimes X^*)=X_+\ot_A (X^*)_-$ is simple in $\D^*_{\C}$. Since
$$
\Hom_{\D^*_{\C}}(X_+\ot_A (X^*)_-,A)=\Hom_{\D^*_{\C}}(X_+,X_+)=k,
$$
it follows that $X_+\ot_A (X^*)_-=A$ in $\D^*_{\C}$, hence $X\ot_A X^*=A$ in $\C$. Thus, $X$ is invertible in $\C$, as claimed.

(2) Since the equivalence ${\rm F}^*$ (\ref{functorT}) maps $P_{\D^*_{\C}}({\bf 1})$ to $P_{\C}({\bf 1})\boxtimes P_{\C}({\bf 1})$, it follows that  
$$\FPdim_{\D^*_{\C}}(P_{\D^*_{\C}}({\bf 1}))=\FPdim_{\C}(P_{\C}({\bf 1}))^2.$$
 
On the other hand, by Lemma \ref{exfac123}, we have
$$\FPdim_{\D^*_{\C}}(P_{\D^*_{\C}}({\bf 1}))=\FPdim_{\Coh(G)}(P_{\Coh(G)}({\bf 1}))\FPdim_{\C}(P_{\C}({\bf 1})).$$
Therefore, we get $\FPdim_{\C}(P_{\C}({\bf 1}))=\FPdim_{\Coh(G)}(P_{\Coh(G)}({\bf 1}))$.

Finally, since for every invertible object $X\in G(\C)$, 
$$\FPdim_{\C}(P_{\C}(X))=\FPdim_{\C}((P_{\C}({\bf 1})),$$  
the claim follows.

(3) Since ${\rm F}^*$ (\ref{functorT}) is an equivalence, there exist unique simple objects $X_g,Y_g\in\C$ such that 
$$(X_g)_+\ot_A (Y_g)_- ={\rm F}^*(X_g\boxtimes Y_g)=A_g.$$ Since $X_g,Y_g\in G(\C)$ by (1), it follows that $(X_g)_+=A_g\ot_A (Y_g)_+^*$, so forgetting the left $A$-module structure yields $X_g=Y_g^*$. Thus, the functor ${\rm F}^*$ (\ref{functorT}) induces an injective group homomorphism 
\begin{equation}\label{groupinjection}
G(k)\xrightarrow{1:1} G(\C),\,\,\,g\mapsto X_g.
\end{equation}

Now it follows from (1), (2) and \ref{thedimofc} that
\begin{eqnarray*}
\lefteqn{|G|=\FPdim(\C)}\\
& = & \sum_{X\in G(\C)}\FPdim(P_{\C}(X)) \ge 
\sum_{X_g\in G(\C)}\FPdim(P_{\C}(X_g))\\
& = & \sum_{g\in G(k)}\FPdim(P_{\Coh(G)}({\bf 1}))=\FPdim(\Coh(G))=|G|.
\end{eqnarray*}
Therefore, we have an equality
$$\FPdim(\C)=\sum_{X_g\in G(\C)}\FPdim(P_{\C}(X_g)),$$
which implies that the map (\ref{groupinjection}) is also surjective, as claimed.

(4) By (1), $\C_{{\rm ad}}$ is the smallest Serre tensor subcategory of $\C$ containing ${\bf 1}$ (see \cite[Section 4.14]{EGNO}). Thus, (3) implies that $G(k)$ is the universal group of $\C$, as claimed.
\end{proof} 

\begin{remark}\label{fusion}
Note that if $G$ is an abstract group such that $p$ does not divide $|G|$, then Theorem \ref{mainnew}, and hence Corollaries \ref{main1} and \ref{maincor}, follow already from Proposition  \ref{grpisomo} (see \cite[Theorem 4.5]{DGNO} and \cite[Theorem 4.22]{LKW}).
\end{remark}

\subsection{$\C\cong {\rm Coh}(G,\omega)$} In this section we show that $\C\cong {\rm Coh}(G,\omega)$ as tensor categories.

For every $(X,{\rm m}_X)\in\C$, $\mathbf{V}=(V,\rho)\in \Rep(G)\subset\D$, consider the $\C$-isomorphism
$$\Phi_{\mathbf{V},X}:F(\mathbf{V})\ot_A X\xrightarrow{\cong} X\ot_A F(\mathbf{V}),$$
given by the central structure on $F$ (\ref{centralstructure}):
\begin{equation}\label{newcentstr}
\Phi_{\mathbf{V},X}=(\Id_X\ot {\rm c}_{A,\mathbf{V}})\circ (\Id_X\ot u_A\ot \Id_{V})\circ {\rm c}_{\mathbf{V},X}\circ (\Id_{V}\ot ({\rm m}_X\circ {\rm c}_{X,A}^{-1})).
\end{equation} 
Since by Proposition \ref{restoE}, $F(\mathbf{V})=V$,
we obtain a family of $\C$-isomorphisms 
$$\Phi_{V,X}:V\ot X\xrightarrow{\cong} V\ot X,\,\,\,V\in\Vect,$$ given by
\begin{equation}\label{phixsimplicity}
V\ot X=\mathbf{V}\ot A\ot_A X\xrightarrow{\Phi_{\mathbf{V},X}}X\ot_A \mathbf{V}\ot A =X\ot V=V\ot X.
\end{equation}

Thus by (\ref{phixsimplicity}), we can view $\Phi_{V,X}$ as an element of 
$$\Hom_{\C}(X,(V^*\ot_k V)\ot X)\cong \Hom_{\C}(X,\End_k(V)\ot X),$$   
so by Lemma \ref{isomphixy}, and the fact that ${\rm End}(F_{\mid \E})=kG$ as Hopf algebras, we obtain a natural family $\Phi:=\{\Phi_{X}\mid X\in \C\}$ of $\C$-morphisms 
\begin{equation}\label{phix}
\Phi_{X}:(X,{\rm m}_X)\to (kG\ot X,\Id_{kG}\ot {\rm m}_X),
\end{equation}
or equivalently, $k$-algebra maps
\begin{equation}\label{phixnew}
\Phi_{X}:\O(G)\to \End_{\C}(X).
\end{equation}

\begin{lemma}\label{phicomod}
For every $X\in\C$, the morphism $\Phi_{X}$ (\ref{phix}) equips $X$ with a structure of a $kG$-comodule in $\C$ (equivalently, an $\O(G)$-module in $\C$).
\end{lemma}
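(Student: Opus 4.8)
The plan is to verify the coassociativity and counit axioms for the coaction $\Phi_X$ by relating them directly to the compatibility identities for the central structure $\Phi_{\mathbf{V},X}$ recorded in Lemma~\ref{isomphixy}. The morphism $\Phi_X$ of (\ref{phix}) encodes, for each $X\in\C$, the collection of maps $\Phi_{V,X}$ of (\ref{phixsimplicity}) as $V$ ranges over $\Vect$, viewed through the natural adjunction as the element of $\Hom_\C(X,kG\ot X)$ cut out by the universal property ${\rm End}(F_{|\E})=kG$. To prove that $\Phi_X$ is a $kG$-comodule structure, I must show the two square diagrams commute: the coassociativity diagram comparing $(\Delta_{kG}\ot\Id_X)\circ\Phi_X$ with $(\Id_{kG}\ot\Phi_X)\circ\Phi_X$, and the counit diagram $(\eps_{kG}\ot\Id_X)\circ\Phi_X=\Id_X$.

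First I would translate each axiom into a statement about the family $\{\Phi_{\mathbf V,X}\}$. The key point is that the comultiplication $\Delta_{kG}$ on $kG=\End(F_{|\E})$ is \emph{dual} to the tensor product of objects in $\E=\Rep(G)$: concretely, $\Delta$ corresponds to the monoidal constraint identifying $F(\mathbf V\ot\mathbf W)$ with $F(\mathbf V)\ot_A F(\mathbf W)$. Hence coassociativity of $\Phi_X$ is exactly the hexagon-type identity for $\Phi$ under tensoring the first variable, namely Lemma~\ref{isomphixy}(2) applied with $X,Z\in\E$ (so that $J_{X,Z}$ becomes the tensor structure (\ref{J}) of $F$ restricted to $\Rep(G)$). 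Unwinding (\ref{newcentstr}) and (\ref{phixsimplicity}) through the canonical isomorphism $F(\mathbf V\ot\mathbf W)\cong V\ot W$, the identity of Lemma~\ref{isomphixy}(2) becomes precisely the assertion that $\Phi_X$ intertwines the two composites $(\Delta\ot\Id)\circ\Phi_X$ and $(\Id\ot\Phi_X)\circ\Phi_X$. The counit axiom is the normalization $\Phi_{\mathbf 1,X}=\Id_X$, which is immediate from (\ref{newcentstr}) since the braiding, unit, and module maps all reduce to identities when $\mathbf V=\mathbf 1$.

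The computational heart is therefore the bookkeeping that matches the coproduct on $kG$ against the tensor constraint $J$ of $F$. I would set this up by recalling that under the identification $\End(F_{|\E})\cong kG$ as Hopf algebras (with $kG$ the group algebra, whose dual is $\O(G)$), the multiplication on $\End(F_{|\E})$ is composition of natural endomorphisms and the comultiplication is induced by the monoidal structure; this is the standard Tannakian reconstruction dictionary. Once that dictionary is fixed, Lemma~\ref{isomphixy}(2)--(3) supply the two axioms on the nose: part (2) gives coassociativity and part (3) (compatibility with $\ot_A$) confirms that $\Phi_X$ is a morphism of $A$-modules, i.e.\ lands in $\C$ rather than merely in $\D$.

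\emph{The main obstacle} I anticipate is not any single algebraic manipulation but rather the careful verification that the abstract Hopf-algebra comultiplication on $kG$ is correctly identified with the instance of $J$ appearing in Lemma~\ref{isomphixy}(2); the commutativity of $A=\O(G)$, invoked just before the lemma, is what guarantees the two left-$A$-module structures $X_+,X_-$ agree sufficiently for the composite (\ref{newcentstr}) to respect the module maps and hence descend to $\C$. Making this identification precise—tracking how the Yoneda-style object $V\ot X$ interacts with both the braiding $\rm c$ and the coproduct—is where the genuine care lies, whereas the diagrammatic verification itself reduces, as the paper states, to a routine check once Lemma~\ref{isomphixy} is in hand.
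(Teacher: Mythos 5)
Your proposal is correct and takes essentially the same route as the paper: by Proposition~\ref{restoE} the functor $F$ sends $\E$ into $\Vect\subset\C$, so Lemma~\ref{isomphixy}(2) restricted to $\E$ yields $\Phi_{V\ot U,X}=(\Phi_{V,X}\ot\Id_{U})\circ (\Id_{V}\ot\Phi_{U,X})$, which under the Hopf-algebra identification $\End(F_{\mid \E})=kG$ is precisely coassociativity, and the counit axiom is the normalization $\Phi_{\mathbf{1},X}=\Id_X$. One correction: the fact that $\Phi_X$ is a morphism in $\C$ (not merely in $\D$) is not what Lemma~\ref{isomphixy}(3) provides --- part (3) concerns multiplicativity in the second variable $Y\ot_A Z$ and is used later, in Corollary~\ref{qtenequ}, to show $\widetilde{\Phi}$ is quasi-tensor; that $\Phi_X$ is a $\C$-morphism is already built into the construction (\ref{phix}), via the naturality and $\C$-linearity of the family $\Phi_{V,X}$ recorded in Lemma~\ref{isomphixy}(1).
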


\begin{proof}
By Proposition \ref{restoE} and Lemma \ref{isomphixy}(2), for every $V,U\in \Vect$ and $X\in\C$, we have 
$$\Phi_{V\ot U,X}=(\Phi_{V,X}\ot\Id_{U})\circ (\Id_{V}\ot\Phi_{U,X}).$$
Now it is straightforward to verify that this translates to the claim.  
\end{proof}

Now by Proposition \ref{grpisomo}(1) and \cite[Proposition 2.6]{EO}, $\C$ admits a quasi-tensor functor to $\Vect$. Let $Q:\C\to \Vect$ be such a functor. Then by Lemma \ref{phicomod}, for every $X\in\C$, we get a  $k$-algebra map   
$$\widetilde{\Phi}_X:\O(G)\xrightarrow{\Phi_X}{\rm End}_{\C}(X)\xrightarrow{Q} {\rm End}_{k}(Q(X)),$$ so we see that we have defined a functor 
\begin{equation}\label{phidef}
\widetilde{\Phi}:\C\to {\rm Coh}(G),\,\,\,X\mapsto (Q(X),\widetilde{\Phi}_X).
\end{equation}

\begin{lemma}\label{phicomodpart2}
For every $g\in G(k)$, $\widetilde{\Phi}_{X_g}=g$ (where $g$ is viewed as an $\O(G)$-module). Thus, $\widetilde{\Phi}$ induces an injective group homomorphism $G(\C)\xrightarrow{1:1} G(k)$.
\end{lemma}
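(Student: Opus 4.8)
The plan is to first reduce the identity $\widetilde\Phi_{X_g}=g$ to a statement about $1$-dimensional $\O(G)$-modules, and then to pin down the resulting character by unwinding the central structure on $F$ through the equivalence ${\rm F}^*$ of (\ref{functorT}). First I would note that, since $X_g$ is invertible in $\C$ and $Q:\C\to\Vect$ is quasi-tensor, the space $Q(X_g)$ is $1$-dimensional: indeed $X_g\ot_A X_g^{*}={\bf 1}$ forces $\dim_k Q(X_g)\cdot\dim_k Q(X_g^{*})=1$. Hence the $k$-algebra map $\widetilde\Phi_{X_g}:\O(G)\to \End_k(Q(X_g))=k$ of (\ref{phixnew}) is a character of $\O(G)$, that is, a $k$-point $h_g\in G(k)$, and $\widetilde\Phi_{X_g}$ is precisely $h_g$ regarded as a $1$-dimensional $\O(G)$-module. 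It therefore remains to prove $h_g=g$.

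The heart of the argument, and what I expect to be the main obstacle, is the identification $h_g=g$. The point is that the half-braiding $\Phi_{\mathbf V,X}$ of (\ref{newcentstr}), evaluated at the regular object $\mathbf V=A=\O(G)$ generating $\E$, measures the monodromy of $A$ around $X$, i.e. the discrepancy between the two left $A$-module structures $X_+$ and $X_-$ of \ref{Exact commutative algebras}. Under ${\rm F}^*$ we have ${\rm F}^*(X_g\boxtimes Y_g)=(X_g)_+\ot_A(Y_g)_-=A_g$ by (\ref{explicitfstar}) and Proposition \ref{grpisomo}(3), where, by definition, $A_g$ is $A$ as a right $A$-module with left $A$-action twisted by $g\in G(k)$. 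Tracing the braidings and the module map ${\rm m}_{X_g}$ appearing in (\ref{newcentstr}) through this identification, the twist in the left $A$-action recorded by $\Phi_{A,X_g}$ is exactly left-translation by $g$; equivalently, the induced $\O(G)$-action $\widetilde\Phi_{X_g}$ on $Q(X_g)$ is evaluation at $g$, so $h_g=g$ and $\widetilde\Phi_{X_g}=g$. I expect this to be the delicate step, since it requires matching the several occurrences of the braiding ${\rm c}$ and of the $A$-module structure in the explicit composition (\ref{newcentstr}) against the $\pm$ conventions defining $A_g$, and checking that the bookkeeping collapses to translation by $g$.

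Finally, I would deduce the last assertion. By the first paragraph, $\widetilde\Phi$ sends each invertible object of $\C$ to a $1$-dimensional $\O(G)$-module, hence to a point of $G(k)$, so it restricts to a map $G(\C)\to G(k)$. By the computation just given this map is $X_g\mapsto g$, which is the inverse of the group isomorphism $g\mapsto X_g$ of Proposition \ref{grpisomo}(3); in particular it is an injective group homomorphism, as claimed.
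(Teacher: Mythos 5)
Your proposal is correct and follows essentially the same route as the paper: reduce to the observation that $\widetilde\Phi_{X_g}$ is a character of $\O(G)$ (the paper gets this from Proposition \ref{grpisomo} and Lemma \ref{phicomod}, you via invertibility and the quasi-tensor property of $Q$), identify that character with $g$ by unwinding the central structure (\ref{newcentstr}) against the labeling $(X_g)_+\ot_A(X_g^*)_-=A_g$ coming from ${\rm F}^*$, and deduce injectivity and the homomorphism property from Proposition \ref{grpisomo}(3). The key identification, which the paper dismisses as ``straightforward to verify'' (namely $\Phi_{V,X_g}=\rho(g)\ot \Id_{X_g}$), is exactly the bookkeeping step you flag as delicate, and your strategy for it --- matching the braidings and the $\pm$ module-structure conventions against the definition of $A_g$ --- is the intended one.
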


\begin{proof}
The first claim follows from Proposition \ref{grpisomo} and Lemma \ref{phicomod}. Also, it is straightforward to verify that for every $g\in G(k)$, the map $\Phi_{V,X_g}\in \Aut_{\C}(V\ot X_g)$ is given by $\rho(g) \ot \Id_{X_g}$ for every $V\in\Vect$, which proves the second claim.  
\end{proof}

\begin{corollary}\label{qtenequ}
The functor $\widetilde{\Phi}$ (\ref{phidef}) is a quasi-tensor equivalence.
\end{corollary}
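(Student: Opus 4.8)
The plan is to verify two things separately: that $\widetilde{\Phi}$ (\ref{phidef}) carries a natural quasi-tensor structure, and that it is an equivalence of abelian categories. The quasi-tensor structure is inherited from the one on $Q$. Indeed, by Proposition \ref{restoE} and Lemma \ref{isomphixy}(2)--(3) the family $\Phi=\{\Phi_X\}$ is compatible with the tensor product $\ot_A$ of $\C$, so that the functorial isomorphisms $Q(X\ot_A Y)\cong Q(X)\ot_k Q(Y)$ coming from the quasi-tensor structure on $Q$ intertwine the $\O(G)$-module structures $\widetilde{\Phi}_{X\ot_A Y}$ and the tensor product module structure on $\widetilde{\Phi}_X,\widetilde{\Phi}_Y$ defined via the comultiplication of $\O(G)$ (Lemma \ref{phicomod}). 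This yields natural isomorphisms $\widetilde{\Phi}(X\ot_A Y)\cong \widetilde{\Phi}(X)\ot\widetilde{\Phi}(Y)$ in $\Coh(G)$ together with $\widetilde{\Phi}({\bf 1})={\bf 1}$, making $\widetilde{\Phi}$ a quasi-tensor functor; in particular it is exact. Moreover, since the composition of $\widetilde{\Phi}$ with the forgetful functor $\Coh(G)\to\Vect$ equals $Q$, which is exact and faithful, $\widetilde{\Phi}$ is itself faithful and the forgetful functor reflects its exactness.

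Next I would record the numerical data. Since $\C$ is pointed (Proposition \ref{grpisomo}(1)) every simple $X_g$ is invertible, so $Q(X_g)$ is one dimensional, whence the underlying dimension of any object of $\C$ equals its length; the same holds in the pointed category $\Coh(G)$, so $\widetilde{\Phi}$ preserves length, i.e.\ Frobenius--Perron dimension. By Lemma \ref{phicomodpart2} together with Proposition \ref{grpisomo}(1),(3), $\widetilde{\Phi}$ induces the bijection $X_g\mapsto g$ between the simple objects of $\C$ and those of $\Coh(G)$. Finally, by (\ref{thedimofc}) and Proposition \ref{grpisomo}(2) we have $\FPdim(\C)=|G|=\FPdim(\Coh(G))$ and $\FPdim_{\C}(P_{\C}(X_g))=\FPdim_{\Coh(G)}(P_{\Coh(G)}(g))$ for every $g\in G(k)$; hence $\widetilde{\Phi}(P_{\C}(X_g))$ is an object of $\Coh(G)$ having the same length as $P_{\Coh(G)}(g)$ and surjecting onto $\widetilde{\Phi}(X_g)=g$.

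The remaining, and main, point is to upgrade these properties to an equivalence, for which it suffices to show that $\widetilde{\Phi}$ preserves projective covers, i.e.\ that $\widetilde{\Phi}(P_{\C}(X_g))\cong P_{\Coh(G)}(g)$. By the previous paragraph this reduces to showing that $\widetilde{\Phi}(P_{\C}(X_g))$ is projective; and since $P_{\C}(X_g)\cong X_g\ot_A P_{\C}({\bf 1})$ and $\widetilde{\Phi}$ is quasi-tensor with $g$ invertible, it is enough to treat $g=e$, i.e.\ to prove that $\widetilde{\Phi}(P_{\C}({\bf 1}))$ is projective in $\Coh(G)$. This is the crux of the argument: because $\widetilde{\Phi}$ is merely quasi-tensor, projectivity is not preserved for formal reasons, so I expect this to be the hard part. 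I would establish it by exploiting the tensor-ideal property of projective objects in the finite tensor category $\C$, the factorization $Q=\text{(forget)}\circ\widetilde{\Phi}$, and the surjectivity of the free module functor $F$ of Proposition \ref{restoE}. Once $\widetilde{\Phi}(P_{\C}(X_g))\cong P_{\Coh(G)}(g)$ is known, $\widetilde{\Phi}$ sends the projective generator $\bigoplus_g P_{\C}(X_g)$ of $\C$ to the projective generator $\bigoplus_g P_{\Coh(G)}(g)$ of $\Coh(G)$, and a dimension count (faithfulness gives injectivity, the matching lengths give surjectivity) shows that $\widetilde{\Phi}$ is an isomorphism on the corresponding endomorphism algebras; therefore $\widetilde{\Phi}$ is an equivalence, and hence a quasi-tensor equivalence.
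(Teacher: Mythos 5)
Your first half agrees with the paper. The paper's proof establishes the quasi-tensor structure exactly as you do: Lemma \ref{isomphixy}(3) gives $\Phi_{V,X\ot_A Y}=(\Id_{X}\ot\Phi_{V,Y})\circ(\Phi_{V,X}\ot \Id_Y)$, which translates into an isomorphism of comodules $(X\ot_A Y,\Phi_{X\ot_A Y})\cong (X,\Phi_X)\ot (Y,\Phi_Y)$ and, after applying $Q$, into the quasi-tensor structure on $\widetilde{\Phi}$; the unit is handled by $\End_{\C}(A)=k$, forcing $\widetilde{\Phi}_A:\O(G)\to k$ to be trivial. Your numerical bookkeeping (lengths equal Frobenius--Perron dimensions in the pointed categories $\C$ and $\Coh(G)$, the bijection $X_g\mapsto g$ on simples from Lemma \ref{phicomodpart2} and Proposition \ref{grpisomo}(3), and the matching $\FPdim_{\C}(P_{\C}(X_g))=\FPdim_{\Coh(G)}(P_{\Coh(G)}(g))$ from Proposition \ref{grpisomo}(2)) is also correct and consistent with the paper.

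The problem is the second half: your argument for the equivalence reduces everything to the claim that $\widetilde{\Phi}(P_{\C}({\bf 1}))$ is projective in $\Coh(G)$, which you explicitly identify as ``the crux'' and then do not prove --- you only list ingredients (``the tensor-ideal property of projectives, the factorization $Q=(\mathrm{forget})\circ\widetilde{\Phi}$, the surjectivity of $F$'') without an argument, and none of these obviously delivers the claim: the tensor ideal of projectives lives in $\C$, whereas projectivity must be verified on the other side of a functor that is merely quasi-tensor, and the factorization through $Q$ only controls underlying vector spaces. So as written the proposal is incomplete precisely at the decisive step. The paper takes a different and shorter route here: having shown $\widetilde{\Phi}$ is quasi-tensor (hence exact and faithful by definition), it concludes directly from the group isomorphism $G(\C)\xrightarrow{\cong}G(k)$, i.e.\ from bijectivity on simple objects, that $\widetilde{\Phi}$ is an equivalence, leaning on the general theory of quasi-tensor functors between finite tensor categories of equal Frobenius--Perron dimension (both sides have $\FPdim$ equal to $|G|$ by (\ref{thedimofc})): in particular, surjective quasi-tensor functors send projectives to projectives (\cite[Theorem 6.1.16]{EGNO}), and a surjective quasi-tensor functor between finite tensor categories of equal $\FPdim$ is an equivalence (\cite[Section 6.3]{EGNO}). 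In other words, the projectivity you flag as the hard part is absorbed by cited general results once the bijection on simples is in hand, rather than proved ad hoc; your projective-cover strategy could likely be completed along those lines, but the proposal as it stands has a genuine gap there (your Morita-style endgame, which additionally needs naturality of the identification of endomorphism algebras, only kicks in after that gap is filled).
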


\begin{proof}
By Lemma \ref{isomphixy}(3), we have 
$$\Phi_{V,X\ot_A Y}=(\Id_{X}\ot\Phi_{V,Y})\circ(\Phi_{V,X}\ot \Id_Y)$$
for every $V\in \Vect$ and $X,Y\in\C$.  
Now it is straightforward to verify that this implies that 
$\widetilde{\Phi}(A)=(k,\widetilde{\Phi}_A)$ is the unit object (since ${\rm End}_{\C}(A)=k$, so $\widetilde{\Phi}_A:\O(G)\to k$ is the trivial homomorphism), and we have an isomorphism 
$$ 
\Phi(X\ot Y)=(X\ot Y,\Phi_{X\ot Y})\cong(X,\Phi_X)\ot (Y,\Phi_Y)=\Phi(X)\ot \Phi(Y)
$$
for every $X,Y\in\C$. Thus, $\Phi$ is a quasi-tensor functor, and hence so is $\widetilde{\Phi}$.

Moreover, by Proposition \ref{grpisomo}(3) and Lemma \ref{phicomod}, $\widetilde{\Phi}$ induces a  group isomorphism $G(\C)\xrightarrow{\cong} G(k)$, which implies it is an equivalence.
\end{proof}
 
Finally, by Corollary \ref{qtenequ}, the associativity structure on $\C$ determines an associativity structure on ${\rm Coh}(G)$, i.e., a class $\omega\in H^3(G,\mathbb{G}_m)$, such that $\widetilde{\Phi}:\C\to {\rm Coh}(G,\omega)$ is a {\em tensor} equivalence, as desired.

This completes the proof of Theorem \ref{mainnew}.

\section{The proof of Corollary \ref{main1}}\label{prrofmain1}

Consider a braided tensor equivalence ${\rm F}: \D\xrightarrow{\cong} \Z({\rm Coh}(G,\omega))$, and let $\mathcal{F}:\Z({\rm Coh}(G,\omega))\twoheadrightarrow {\rm Coh}(G,\omega)$ be the forgetful functor. Let $f(G, \omega, {\rm F})$ denote the subcategory of $\D$ consisting of all objects sent to $\Vect$ under $\mathcal{F}\circ{\rm F}$. In other words, $f(G,\omega,{\rm F})$ is the preimage of $\Rep(G)$ under ${\rm F}$ (see Lemma \ref{centergomega0}). Then $f(G,\omega,{\rm F})$ is a Tannakian Lagrangian subcategory of $\D$, which is clearly independent of the equivalence class of $(G,\omega,{\rm F})$. 
Thus, every equivalence class $[(G,\omega,{\rm F})]$ gives rise to a Tannakian Lagrangian subcategory ${\rm f}([G,\omega,{\rm F}]):=f(G,\omega,{\rm F})$ of $\D$.

Conversely, assume $\Rep(G)\subset \D$ is a Tannakian Lagrangian subcategory of $\D$ 
for some finite group scheme $G$ over $k$. Then by
Theorem \ref{mainnew}, we have a braided tensor equivalence ${\rm F}:\D \cong \Z({\rm Coh}(G,\omega))$ for some  
$\omega\in H^3(G,\mathbb{G}_m)$. Thus, every Tannakian Lagrangian subcategory $\Rep(G)$ of $\D$ gives rise to an equivalence class $[(G,\omega,{\rm F})]$. Set ${\rm h}(\Rep(G)):=[(G,\omega,{\rm F})]$.

We claim that the assignments ${\rm f},{\rm h}$ constructed above are inverse to each other.

Given a Tannakian Lagrangian subcategory $\E= \Rep(G)$ of $\D$, let $A:=\O(G)$ and $\C:=\Mod(A)_{\D}$ (see \ref{Exact commutative algebras}). Since by Corollary \ref{fextendstobr}, the functor $\D\xrightarrow{{\rm F}} \Z(\C)\xrightarrow{{\rm Forget}}\C$ coincides with the free $A$-module functor, it follows that the category ${\rm f}({\rm h}(\E))=f(G,\omega,{\rm F})$
consists of all objects $X$ in $\D$ such that $X \otimes A$ is a multiple of $A$.
Since $A$ is the regular object of $\E$, it follows that $\E\subset {\rm f}({\rm h}(\E))$, so  
$\E={\rm f}({\rm h}(\E))$ (as both categories have FP dimension $|G|$). Thus, ${\rm f}\circ {\rm h} ={\rm Id}$, as desired. 

Given an equivalence class $[(G_1,\omega_1,{\rm F}_1)]$, we have to show now that $({\rm h}\circ {\rm f})([G_1,\omega_1,{\rm F}_1])=[(G_1,\omega_1,{\rm F}_1)]$. 
Since $f(G_1,\omega_1,{\rm F}_1)\cong{\rm Rep}(G_1)$ as symmetric tensor categories (via ${\rm F}_1$), it follows from \cite{DM} that there exist a finite group scheme $G_2$ over $k$, and a group scheme isomorphism $\phi:G_1\xrightarrow{\cong}G_2$, such that $f(G_1,\omega_1,{\rm F}_1)={\rm Rep}(G_2)$ and the restriction of ${\rm F}_1$ to $f(G_1,\omega_1,{\rm F}_1)$ coincides with $\phi^*:\Rep(G_2)\xrightarrow{\cong}\Rep(G_1)$. 
Thus, we have 
$$({\rm h}\circ {\rm f})([G_1,\omega_1,{\rm F}_1])={\rm h}({\rm Rep}(G_2))=[(G_2,\omega_2,{\rm F}_2)],$$ 
where ${\rm F}_2:\D \cong \Z({\rm Coh}(G_2,\omega_2))$ is a braided tensor equivalence for some $\omega_2\in H^3(G_2,\mathbb{G}_m)$. Since ${\rm F}_1(\O(G_2))=\phi^*(\O(G_2))=\O(G_1)$, it follows that ${\rm F}_1$ induces a tensor equivalence 
$${\rm F}_1:{\rm Mod}(\O(G_2))_{\D}\xrightarrow{\cong}{\rm Mod}(\O(G_1))_{\Z({\rm Coh}(G_1,\omega_1))}.$$
Also, ${\rm F}_2$ induces a tensor equivalence 
$${\rm F}_2:{\rm Mod}(\O(G_2))_{\D}\xrightarrow{\cong}{\rm Mod}(\O(G_2))_{\Z({\rm Coh}(G_2,\omega_2))}.$$

Now, let 
$$\mathcal{I}_i:{\rm Coh}(G_i,\omega_i)\xrightarrow{\cong}{\rm Mod}(\O(G_i))_{\Z({\rm Coh}(G_i,\omega_i))},\,\,\,i=1,2$$ 
be the tensor equivalences given by Lemma \ref{centergomega}(3), and consider  
the tensor equivalence 
$$\varphi:=\mathcal{I}_2^{-1}\circ {\rm F}_2\circ {\rm F}_1^{-1}\circ \mathcal{I}_1:{\rm Coh}(G_1,\omega_1)\xrightarrow{\cong}{\rm Coh}(G_2,\omega_2).$$ 
Then we have $\mathcal{F}_2\circ {\rm F}_2=\varphi\circ \mathcal{F}_1\circ {\rm F}_1$, where 
$$\mathcal{F}_i:\Z({\rm Coh}(G_i,\omega_i))\twoheadrightarrow {\rm Coh}(G_i,\omega_i),\,\,\,i=1,2$$ are the forgetful functors. Thus, $[(G_1,\omega_1,{\rm F}_1)]=[(G_2,\omega_2,{\rm F}_2)]$, so we have ${\rm h}\circ {\rm f}=\Id$, as desired.

The proof of Corollary \ref{main1} is complete. \qed

\section{The proof of Corollary \ref{maincor}}\label{proofmaincor} 

Fix a finite group scheme $G$ over $k$. Set $\E:=\Rep(G)$, and retain the notation of \ref{mextrepg}.

By Theorem \ref{mainnew}, we have a surjective map of sets
$$\alpha:\mathcal{M}_{\rm ext}(\E)\cong H^3(G,\mathbb{G}_m),\,\,\,(\D,\iota)\mapsto \omega,$$
where $\omega\in H^3(G,\mathbb{G}_m)$ is such that $\D\cong \Z(\Coh(G,\omega))$ as braided tensor categories. Moreover, it is clear that $\alpha(\D,\iota)=1$ if and only if $\D\cong \Z(\Coh(G))$ as braided tensor categories. Thus, it remains to show that $\alpha$ is a group homomorphism.

Set $A:=\O(G\times G)$ and $B:=\O(G\times G/\Delta(G))$ (see Lemma \ref{gdiag}). Then $B\subset A$ are exact commutative algebras in the Tannakian category $\Rep(G\times G)=\Rep(G)\boxtimes \Rep(G)$, and we have to prove that
$${\rm Mod}^{0}(B)_{\Z(\Coh(G,\omega_1))\boxtimes\Z(\Coh(G,\omega_2))}\cong \Z(\Coh(G,\omega_1\omega_2))$$ as braided tensor categories. By (\ref{tenprofcenters}), it suffices to prove that
$${\rm Mod}^{0}(B)_{\Z(\Coh(G\times G,\omega_1\times \omega_2))}\cong \Z(\Coh(G,\omega_1\omega_2))$$ as braided tensor categories.

To this end, note that similarly to \cite[Example 4.11(ii)]{DMNO}, one shows that  exact subalgebras $B$ of $A$ are in one to one correspondence with tensor subcategories of $\Z(\Coh(G\times G,\omega_1\times \omega_2))$, such that $B$ corresponds to the image of the functor 
\begin{equation}\label{image}
{\rm Mod}^{0}(B)_{\Z(\Coh(G\times G,\omega_1\times \omega_2))}\xrightarrow{-\ot_{B}A} \Coh(G\times G,\omega_1\times \omega_2).
\end{equation} 
Since the image of the functor (\ref{image}) is 
$$\Coh(\Delta(G),(\omega_1\times \omega_2)_{\mid \Delta(G)})\cong\Coh(G,\omega_1\omega_2),$$ we are done. \qed

\section{Examples}\label{examples}

Recall that a finite group scheme $G$ over $k$ is \emph{commutative} if and only if $\O(G)$ is a finite dimensional commutative and {\em cocommutative} Hopf algebra. Thus, if $G$ is a finite commutative group scheme over $k$ then its \emph{group algebra} $kG:=\O(G)^*$ is also a finite dimensional commutative and cocommutative Hopf algebra, so it represents a finite \emph{commutative} group scheme $G^D$ over $k$, which is called the \emph{Cartier dual} of $G$.

\begin{example}\label{exampl1}
Let $G$ be a {\em finite abelian $p$-group}. Then by \cite[Corollary 5.8]{EG2}, $H^3(G^D,\mathbb{G}_m)=1$, thus by Corollary \ref{maincor}, $\mathcal{M}_{\rm ext}(\Rep(G^D))=1$ is the trivial group. For example, if $G=\mathbb{Z}/p\mathbb{Z}$ then $G^D=\mu_p$ is the Frobenius kernel of the multiplicative group $\mathbb{G}_m$ (see, e.g., \cite[Section 2.2]{G2}), so $\mathcal{M}_{\rm ext}(\Rep(\mu_p))=1$.
\end{example}

\begin{example}\label{exampl2}
Let $\alpha_{p,r}$ denote the {\em $r$-th Frobenius kernel of the additive group $\mathbb{G}_a$} (see, e.g., \cite[Section 2.6]{EG2}). 
Let $G:=\prod_{i=1}^n \alpha_{p,r_i}$. Then by \cite[Corollary 5.10]{EG2}, we have $H^3(G^D,\mathbb{G}_m)=H^3(G^D,\mathbb{G}_a)$. Thus by Corollary \ref{maincor}, we have a group isomorphism
$$\mathcal{M}_{\rm ext}(\Rep(G^D))=H^3(G^D,\mathbb{G}_a).$$

For example, if $G=\alpha_p^n$ then $(\alpha_p^n)^D=\alpha_p^n$, so we have    
$$\mathcal{M}_{\rm ext}(\Rep(\alpha_p^n))=H^3(\alpha_p^n,\mathbb{G}_a).$$ 
Thus by \cite[Proposition 2.2]{EG2}, we have
$$\mathcal{M}_{\rm ext}(\Rep(\alpha_p^n))\cong \wedge^3 \mathfrak{g} \oplus (\mathfrak{g}\ot \mathfrak{g}^{(1)}),\,\,\,p>2,$$
and 
$$\mathcal{M}_{\rm ext}(\Rep(\alpha_2^n))\cong S^3 \mathfrak{g},$$
where $\mathfrak{g}:={\rm Lie}(\alpha_p^n)$ and $\mathfrak{g}^{(1)}$ is the Frobenius twist of $\mathfrak{g}$ \footnote{Namely, $\mathfrak{g}^{(1)}=\g$ as abelian groups, and $a\in k$ acts on $\mathfrak{g}^{(1)}$ as $a^{p^{-1}}$ does on $\g$.}.
\end{example}

\begin{example}\label{exampl3}
Let $\g$ be a finite dimensional {\em restricted $p$-Lie algebra} over $k$, and let $u(\g)$ be its {\em restricted universal enveloping algebra} (see, e.g., \cite[Section 2.2]{G2}). Since $u(\g)^*$ is a finite dimensional commutative Hopf algebra over $k$, $u(\g)^*=\O(\Gamma)$ is the coordinate algebra of a finite group scheme $\Gamma$ over $k$. Recall that $\g$ is the Lie algebra of $\Gamma$, and $\Rep(\Gamma)\cong \Rep(u(\g))$ as symmetric tensor categories.

Now by \cite[Theorem 5.4]{EG2}, we have $H^3(\Gamma,\mathbb{G}_m)=H^3(\Gamma,\mathbb{G}_a)$. Thus by Corollary \ref{maincor}, we have a group isomorphism
\begin{equation}\label{sscoh}
\mathcal{M}_{\rm ext}(\Rep(\Gamma))=H^3(\Gamma,\mathbb{G}_a).
\end{equation}

For example, if $\g$ is {\em semisimple} (i.e., $\g$ is the Lie algebra of a simple, simply connected algebraic group defined and split over $\mathbb{F}_p$) then by \cite[Corollary 1.6]{FP}, $H^{\bullet}(\Gamma,\mathbb{G}_a)$ is zero in odd degrees. Thus, by (\ref{sscoh}), we have $\mathcal{M}_{\rm ext}(\Rep(\Gamma))=H^3(\Gamma,\mathbb{G}_a)=1$ is trivial in this case.
\end{example}

\begin{example}\label{exampl4}
Let $\g$ be a {\em semisimple} restricted $p$-Lie algebra over $k$ (see Example \ref{exampl3}). Let $\mathfrak{k}$ be the $1$-dimensional abelian restricted $p$-Lie algebra over $k$ with $u(\mathfrak{k})^*=\O(\alpha_p)=k[x]/x^p$, where $x$ is a primitive element \footnote{I.e., $\Delta(x)=x\ot 1+1\ot x$.}. Recall that 
$H^1(\alpha_p,\mathbb{G}_a)={\rm sp}_k\{x\}$.
 
Consider now the restricted $p$-Lie algebra $\tilde{\g}:=\g\oplus \mathfrak{k}$, and let $\widetilde{\Gamma}$ be the finite group scheme over $k$ such that $u(\tilde{\g})^*=\O(\widetilde{\Gamma})$. Then $\widetilde{\Gamma}=\Gamma\times \alpha_p$, and the cup product induces an isomorphism
$$H^2(\Gamma,\mathbb{G}_a)\ot H^1(\alpha_p,\mathbb{G}_a)\xrightarrow{\cong} H^3(\widetilde{\Gamma},\mathbb{G}_a).$$
Equivalently, since $x$ is a basis of $H^1(\alpha_p,\mathbb{G}_a)$, we have an isomorphism
$$H^2(\Gamma,\mathbb{G}_a)\xrightarrow{\cong} H^3(\widetilde{\Gamma},\mathbb{G}_a),\,\,\,\xi\mapsto \xi\ot x.$$

Now recall (see, e.g., \cite{FP}) that for $p\ne 2,3$, we have an isomorphism
$$\tau:\g^{*(1)}\xrightarrow{\cong} H^2(\Gamma,\mathbb{G}_a),\,\,\,f\mapsto \g_f,$$ where $\g^{*(1)}$ is the Frobenius twist of $\g^*$ \footnote{Namely, $\g^{*(1)}$ is the space of semi-linear maps $\mathfrak{g}\to k$.}, and $\g_f=\g\oplus k$ (as an abstract Lie algebra) with $[p]$-operator defined by $(v,a)^{[p]}:=(v^{[p]},f(v))$.
Thus, we obtain an isomorphism 
$$\Omega:\g^{*(1)}\xrightarrow{\cong}H^3(\widetilde{\Gamma},\mathbb{G}_a),\,\,\,f\mapsto \tau(f)\ot x.$$
In particular, by (\ref{sscoh}), we have 
$$\mathcal{M}_{\rm ext}(\Rep(\widetilde{\Gamma}))\cong \g^{*(1)}.$$  
\end{example}


\begin{thebibliography}{ABCD1}

\bibitem
[DM]{DM} 
P. Deligne and J. Milne. Tannakian Categories. {\em Lecture Notes in Mathematics} {\bf 900}, 101--228, 1982.

\bibitem
[DN]{DN} 
A. Davydov and D. Nikshych. Braided Picard groups and graded extensions of braided tensor categories. {\em arXiv:2006.08022}.

\bibitem
[DGNO1]{DGNO} 
V. Drinfeld, S. Gelaki, D. Nikshych and V. Ostrik. Group-theoretical properties of nilpotent modular categories. {\em arXiv:0704.0195}.

\bibitem
[DGNO2]{DGNO2} 
V. Drinfeld, S. Gelaki, D. Nikshych and V. Ostrik. On braided fusion
categories I. {\em Selecta Mathematica New Series}, {\bf 16} (2010), 1--119.

\bibitem
[DMNO]{DMNO} 
A. Davydov, M. M\"{u}ger, D. Nikshych and V. Ostrik. The Witt group of non-degenerate braided fusion categories. {\em J. Reine Angew. Math.} {\bf 677} (2013), 135--177.

\bibitem
[EG1]{EG} 
P. Etingof and S. Gelaki. Exact sequences of tensor categories with respect to a module category. {\em Adv.  Math.} {\bf 308} (2017), 1187--1208.

\bibitem
[EG2]{EG2} 
P. Etingof and S. Gelaki. Finite symmetric integral tensor categories with the Chevalley property, with an Appendix by Kevin Coulembier and Pavel Etingof. {\em International
Mathematics Research Notices} {\bf 12} (2021), 9083--9121.

\bibitem
[EO1]{EO} 
P. Etingof and V. Ostrik. Finite tensor categories. \textit{Moscow Math.\ Journal} \textbf{4} (2004), 627--654.

\bibitem
[EO2]{EtO} 
P. Etingof and V. Ostrik. On the Frobenius functor for symmetric tensor categories in positive characteristic. 
{\em arXiv:1912.12947}.

\bibitem
[EGNO]{EGNO} 
P. Etingof, S. Gelaki, D. Nikshych and V. Ostrik. Tensor categories. {\em AMS Mathematical Surveys and Monographs book series} {\bf 205} (2015), 362 pp.

\bibitem
[FP]{FP} 
E. Friedlander and B. Parshall. Cohomology of Lie Algebras and Algebraic Groups. {\em American Journal of Mathematics} (1986), {\bf 108}, No. 1, 235--253.

\bibitem
[G]{G2} 
S. Gelaki. Module categories over affine group schemes. {\em Quantum Topology} {\bf 6} (2015), no. 1, 1--37.

\bibitem
[GS]{GS} S. Gelaki and D. Sebbag. On finite non-degenerate braided tensor categories with a Lagrangian subcategory. {\em Transactions of the AMS}. {\em arXiv:1835.01568}.

\bibitem
[GVR]{GV}
C. Galindo and C. Venegas-Ram\'{i}rez.  
Categorical Fermionic actions and minimal modular extensions. {\em arXiv:1712.07097}.

\bibitem
[JFR]{JFR} 
T. Johnson-Freyd and D. Reutter. Minimal nondegenerate extensions. {\em arXiv:2105.15167}.

\bibitem
[LKW]{LKW} 
T. Lan, L. Kong and XG. Wen. Modular Extensions of Unitary Braided Fusion Categories and 2+1D Topological/SPT Orders with Symmetries. {\em Commun. Math. Phys.} {\bf 351}, 
709--739 (2017).
%https://doi.org/10.1007/s00220-016-2748-y.

\bibitem
[OY]{OY} 
V. Ostrik and Z. Yu. On the minimal extension and structure of weakly group-theoretical braided fusion categories. {\em arXiv:2105.01814}.

\bibitem
[S]{S} 
K. Shimizu. Non-degeneracy conditions for braided finite tensor categories. {\em Adv. Math.} {\bf 355} (2019), 106778, 36 pp.

\bibitem[VR]{VR}
C. F. Venegas-Ram\'{i}rez. Minimal modular extensions for super-Tannakian categories. {\em arXiv:1908.07487}.

\bibitem[W]{W} 
W. Waterhouse. Introduction to affine group schemes. {\em Graduate Texts in Mathematics} {\bf 66}. Springer-Verlag, New York-Berlin, 1979. xi+164 pp.

\end{thebibliography}
\end{document}